\documentclass{amsart}

\usepackage{amsmath,amssymb,amsthm}
\usepackage{graphicx}
\usepackage{color}
\usepackage{hyperref}


\hyphenation{Le-gen-dri-an ma-ni-fold}


\DeclareMathOperator{\tb}{tb}
\DeclareMathOperator{\rot}{rot}
\DeclareMathOperator{\lk}{lk}

\DeclareMathOperator{\writhe}{writhe}

\newtheorem{theorem}{Theorem} [section]
\newtheorem{thm}[theorem]{Theorem}

\newtheorem{corollary}[theorem]{Corollary}
\newtheorem{proposition}[theorem]{Proposition}
\newtheorem{lemma}[theorem]{Lemma}

\theoremstyle{definition}
\newtheorem{defn}[theorem]{Definition} 
\newtheorem{remark}[theorem]{Remark} 
\newtheorem{example}[theorem]{Example}


\begingroup\expandafter\expandafter\expandafter\endgroup
\expandafter\ifx\csname pdfsuppresswarningpagegroup\endcsname\relax
\else
  \pdfsuppresswarningpagegroup=1\relax
\fi


\frenchspacing 


\begin{document}


\title[Rotation Numbers and the Euler Class in Open Books]{Rotation Numbers and the Euler Class in Open Books}

\author{Sebastian Durst}
\address{Fraunhofer-Institut f\"ur Hochfrequenzphysik und Radartechnik FHR,
Fraunhoferstr. 20, 53343 Wachtberg, Germany}
\email{sdurst@math.uni-koeln.de}

\author{Marc Kegel}
\address{Institut f\"ur Mathematik, Humboldt-Universit\"at zu Berlin, Unter den Linden 6, 10099 Berlin, Germany}
\email{kegemarc@math.hu-berlin.de}

\author{Joan E. Licata}
\address{Mathematical Sciences Institute, The Australian National University, ACT 0200}
\email{joan.licata@anu.edu.au}


\begin{abstract}
This paper introduces techniques for computing a variety of numerical invariants associated to a Legendrian knot in a contact manifold presented by an open book with a Morse structure.  Such a Legendrian knot admits a front projection to the boundary of a regular neighborhood of the binding.  From this front projection, we compute the rotation number for any null-homologous Legendrian knot as a count of oriented cusps and linking or intersection numbers; in the case that the manifold has non-trivial second homology, we can recover the rotation number with respect to a Seifert surface in any homology class.  We also provide explicit formulas for computing the necessary intersection numbers from the front projection, and we compute the Euler class of the contact structure supported by the open book.  Finally, we introduce a notion of Lagrangian projection and compute the classical invariants of a null-homologous Legendrian knot from its projection to a fixed page.

\end{abstract}

\date{\today} 

\keywords{open books, Legendrian knots, classical invariants, rotation number, Euler class, Lagrangian projection} 

\subjclass[2010]{57M27, 57R17, 53D35; 53D10, 57N10} 

\maketitle

\section{Introduction}

The rotation and Thurston-Bennequin numbers  are defined for oriented null-homologous Legendrian knots in arbitrary contact $3$-manifolds.  In the case of a Legendrian knot $\Lambda$ in the standard contact $3$-space, which we denote by $(\mathbb{R}^3, \xi_\text{std})$, these can be computed via simple combinatorial formulas from either the front or Lagrangian projection of $\Lambda$.   Analogous techniques have been developed for Legendrians in special classes of $3$-manifolds that admit natural notions of projections, and or in the case of Legendrians embedded in a single page of an open book \cite{SO18, EtOz08, DuKeKl16, DuKe18}.  This paper establishes techniques for computing the classical invariants of a generic Legendrian knot in an arbitrary contact $3$-manifold.  A second key result of the paper is a technique for computing the Euler class of the contact structure itself; see Theorem~\ref{thm:eu}.  Although there are other approaches to computing invariants of a contact structure viewed as a plane field,  the technique introduced here  is markedly simpler than the existing methods we are aware of (for example, \cite{EtOz08} and \cite{DG2004}).

The notion of a \textit{Morse structure} on an open book presenting a contact manifold was introduced in  \cite{GaLi15} (and is reviewed in Section~\ref{sec:back}). A Morse structure determines a $2$-complex, called the \textit{skeleton},  and a vector field $V$.  Given  a Legendrian knot $\Lambda$ transverse to the skeleton and disjoint from the binding, the \textit{front projection} $\mathcal{F}(\Lambda)$ is the image of $\Lambda$ under the flow by $\pm V$ to a collection of tori embedded in $M$. The resulting curves share many properties with the front projection of a Legendrian knot in $(\mathbb{R}^3, \xi_\text{std})$.  For example, it was shown in \cite{GaLi15} that the Thurston-Bennequin number of a null-homologous Legendrian knot can be computed from its front projection.  The first result in this paper establishes  combinatorial techniques for computing the rotation number from $\mathcal{F}(\Lambda)$.

Throughout the paper, let $\Lambda\subset (M,\xi)$ be an oriented null-homologous Legendrian knot in a contact $3$-manifold presented via an open book $(B, \pi)$ with a Morse structure. The vector field $V$ is parallel to each page $\Sigma_t=\pi^{-1}(t)$ of the open book, and we let $L_i$ denote the link in $M$ consisting of the critical points of $V|_{\Sigma_t}$ with index $i$, oriented by increasing $t$ value. Define $\mathcal{L}$ to be the link $L_0\cup -L_1$. See Section~\ref{sec:back} for essential definitions related to Morse structures and front projections.

\begin{thm}\label{thm:front} Suppose that $M$ is an integer homology $3$-sphere.  If $D$ and $U$ denote the number of cusps of $\mathcal{F}(\Lambda)$ oriented down and up, respectively, then
\[ \text{rot}(\Lambda)=\frac{1}{2}[D-U]+\text{lk}(B\cup \mathcal{L},\Lambda).\] 
\end{thm}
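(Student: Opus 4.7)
The plan is to prove the theorem by obstruction theory. Since $M$ is an integer homology sphere, $\Lambda$ bounds a Seifert surface $\Sigma$, and $\rot(\Lambda)$ equals the obstruction to extending the unit tangent $T\Lambda$ to a nonvanishing section of $\xi|_{\Sigma}$. The idea is to build an approximate extension using the Morse vector field $V$ and to measure the failure in two pieces: a boundary contribution (the cusp count) and a bulk contribution (the intersections of $\Sigma$ with the singular locus of the extension, which will be Poincar\'e dual to $\lk(B \cup \mathcal{L}, \Lambda)$).

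First I would use $V$ to construct a section $s$ of $\xi$ defined on $M \setminus (B \cup L_0 \cup L_1)$. Generically $T\Sigma_t \cap \xi$ is a line field on each page, and a canonical choice of nonvanishing vector in this line, obtained by projecting $V$, produces such a section. Because a Morse function on a page, which is a compact surface with boundary $B$ along which $V$ points outward, admits no index-$2$ critical points, the link $L_2$ is empty. A standard obstruction argument then gives
\[
\rot(\Lambda) \;=\; \operatorname{wind}_\Lambda(T\Lambda, s) \;+\; \sum_{p \in \Sigma \cap (B \cup L_0 \cup L_1)} \operatorname{ind}_p(s),
\]
after choosing $\Sigma$ transverse to the singular $1$-complex.

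Next I would identify the boundary term with the oriented cusp count. By construction the front projection is the image of $\Lambda$ under the flow of $\pm V$, so cusps in $\mathcal{F}(\Lambda)$ occur precisely where $T\Lambda$ is parallel to $V$. The same local model near the binding torus used in \cite{GaLi15} to prove the Thurston--Bennequin formula shows that each down cusp contributes $+\tfrac{1}{2}$ and each up cusp $-\tfrac{1}{2}$ to $\operatorname{wind}_\Lambda(T\Lambda, s)$, yielding the summand $\tfrac{1}{2}(D - U)$.

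Finally I would compute the local indices of $s$ at each type of singularity. At an intersection of $\Sigma$ with a component of the binding $B$, positivity of the open book forces the contact planes to make a positive turn around $B$, giving $\operatorname{ind}_p(s) = +1$ and summing to $\lk(B, \Lambda)$. At an intersection with $L_0$, the local model of $V$ is a page source, again giving $\operatorname{ind}_p(s) = +1$ and contributing $\lk(L_0, \Lambda)$. At an intersection with $L_1$, $V$ has a saddle on the page, producing $\operatorname{ind}_p(s) = -1$ and contributing $-\lk(L_1, \Lambda) = \lk(-L_1, \Lambda)$. Adding the three contributions gives $\lk(B \cup \mathcal{L}, \Lambda)$, which together with the cusp count completes the proof. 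The main difficulty will be in computing $\operatorname{ind}_p(s)$ at a page saddle and at the binding, since one must track how the rotation of $\xi$ across pages combines with the page-level index of $V$, and confirm that the orientation conventions produce $+\lk(L_0, \Lambda) - \lk(L_1, \Lambda)$ rather than the opposite signs.
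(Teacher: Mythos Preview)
Your approach is correct but genuinely different from the paper's.  The paper does not compute local indices on a fixed Seifert surface; instead it reduces to the case $\lk(B\cup\mathcal{L},\Lambda)=0$ by exhibiting explicit Legendrian isotopies, realised as sequences of front Reidemeister moves (K2 for $L_0$, a K3 sequence for $L_1$, and B1 for $B$), that pass $\Lambda$ across each type of component and verifying that in each case the change in linking is exactly offset by the change in $\tfrac12(D-U)$.  Once all linking vanishes, $V$ trivialises $\xi$ over a Seifert surface contained in $M\setminus(B\cup\mathcal{L})$, and the usual $(\R^3,\xist)$ cusp argument finishes the proof.  Your relative Poincar\'e--Hopf argument is more direct and conceptual: it explains immediately why each linking term appears and with what sign, and in fact your local index computation at $L_0$ and $L_1$ is essentially a proof of the paper's later Theorem on the Euler class, since it identifies the signed zero locus of $V\in\Gamma(\xi)$ with $L_0-L_1=\mathcal{L}$.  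The paper's route, by contrast, stays entirely within the front-projection calculus and produces as a byproduct the explicit crossing isotopies that are reused to handle arbitrary $3$-manifolds.  One small simplification for you: in a compatible Morse structure $V$ is already Legendrian (this is part of the Liouville condition), so no projection to $\xi$ is needed; and your justification for the index at $B$ should be phrased in terms of $V$ pointing radially toward $B$ on every page, not in terms of the contact planes turning.
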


 Theorem~\ref{thm:front} generalizes to arbitrary $3$-manifolds.  In this setting, the rotation number of a Legendrian knot depends on a choice of the homology class of a Seifert surface;  we first explain how to identify a relative second homology class  and then compute the associated rotation number. Throughout the paper, all homology will be taken with integer coefficients.

In arbitrary $3$-manifolds, it will be convenient to replace a Legendrian knot $\Lambda$ with a link $\Lambda \cup X$ that is null-homologous in the cylinder $\Sigma \times I$.   It follows from Remark 6.3 in \cite{GaLi15} (and  the proof of Lemma~\ref{lem:intB} below)  that  $\Lambda$ may be isotoped through Legendrians until it is disjoint from the page $\Sigma_0$ of the open book.  The standing assumption that $\Lambda$ is null-homologous allows us to choose a Seifert surface $H$ for $\Lambda$. Let $X:=H\cap \partial \big(M\setminus N(\Sigma_0)\big)$.  By construction,  $\Lambda \cup X$ is null-homologous in the  cylinder and any Seifert surface for $\Lambda\cup X$ may be glued along $X$ to produce a Seifert surface for $\Lambda$ in the class of $H$.    (We make no assumption that the components of $X$ are themselves Legendrian, but if desired, one may cut the original surface $H$ along a Legendrian approximation of $H\cap \Sigma_0$ and then isotope this into the interior of $M \setminus \Sigma_0$.)

\begin{lemma}\label{lem:Xdet} The link $X$ uniquely determines $[H]$ as an element of $H_2(M, \Lambda)$.
\end{lemma}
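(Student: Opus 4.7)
The plan is to show that any two Seifert surfaces $H$ and $H'$ for $\Lambda$ producing the same associated link $X$ represent the same class in $H_2(M, \Lambda)$. The long exact sequence of the pair $(M, \Lambda)$ gives an injection $H_2(M) \hookrightarrow H_2(M, \Lambda)$, since $H_2(\Lambda) = 0$, and because $\partial H = \partial H' = \Lambda$ the algebraic difference $H - H'$ is a closed $2$-cycle in $M$ whose class in $H_2(M)$ maps to $[H] - [H']$ in $H_2(M, \Lambda)$. It therefore suffices to show that $H - H'$ bounds a $3$-chain in $M$.

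To produce such a $3$-chain, I would split $H - H'$ along $\partial N(\Sigma_0)$. Writing $H = H_N \cup H_C$ and $H' = H_N' \cup H_C'$, where the subscripts $N$ and $C$ denote the portions in $N(\Sigma_0)$ and in the complement $C := M \setminus N(\Sigma_0)$ respectively, the hypothesis $X = H \cap \partial C = H' \cap \partial C$ ensures that $H_C - H_C'$ is a closed $2$-cycle in $C$ (the $\Lambda$ and $X$ boundary pieces cancel) and that $H_N - H_N'$ is a closed $2$-cycle in $N(\Sigma_0)$ (the $X$ boundary pieces cancel). Here I use that $\Lambda \subset C$ is disjoint from $N(\Sigma_0)$, which follows from the arrangement preceding the lemma.

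It then remains to observe that both $2$-cycles bound $3$-chains. Because the page $\Sigma$ is a compact surface with nonempty boundary $B$, we have $H_2(\Sigma) = 0$. The complement $C$ deformation retracts onto a single page (for instance $\Sigma_{1/2}$), and $N(\Sigma_0)$ deformation retracts onto $\Sigma_0$, so $H_2(C) = H_2(N(\Sigma_0)) = 0$. Consequently $H_C - H_C'$ and $H_N - H_N'$ each bound $3$-chains in their respective pieces of $M$, and the sum of these two $3$-chains has boundary $H - H'$. This yields $[H] = [H']$ in $H_2(M, \Lambda)$. The main point is the homological input from the open book structure, namely the vanishing of $H_2$ of each piece; once this is set up, the rest is a Mayer--Vietoris-style decomposition.
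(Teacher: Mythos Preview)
Your proof is correct and uses essentially the same idea as the paper: both rest on $H_2(\Sigma\times I)=0$, which holds because $\partial\Sigma\neq\emptyset$. The paper invokes the long exact sequence of the pair $(\Sigma\times I,\Lambda\cup X)$ to obtain uniqueness of the Seifert class in the cylinder, whereas you decompose the difference cycle $H-H'$ along $\partial N(\Sigma_0)$ and use the vanishing of $H_2$ on each piece; the arguments are equivalent, with yours also handling the $N(\Sigma_0)$ piece explicitly (the paper leaves that step implicit).
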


In the next theorem and throughout the paper, we write $A\bullet B$ to denote the intersection product of $A$ and $B$.  Since $A$ and $B$ will  always be oriented submanifolds which may be isotoped to intersect transversely,  $A\bullet B$ is simply the algebraic intersection number.

\begin{thm}\label{prop:front} Suppose that $\Lambda\subset M\setminus \Sigma_0$ and let $X$ be an auxiliary link with the property that $[\Lambda\cup X]=0\in H_1(\Sigma\times I)$. Let $D$ and $U$ denote the number of cusps of $\mathcal{F}(\Lambda)$ oriented down and up, respectively.  Let $H_X$ be any Seifert surface for $\Lambda$ in the class in $H_2(M, \Lambda)$ determined by $X$.  Then the rotation number of $\Lambda$ with respect to the second homology class determined by $X$ is computed by the following:
 \[ \text{rot}_X(\Lambda)=\frac{1}{2}[D-U] + \lk(B,\Lambda)+ \mathcal{L} \bullet H_X\] 
\end{thm}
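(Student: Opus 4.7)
The plan is to carry out an obstruction-theoretic calculation paralleling the argument for Theorem~\ref{thm:front}, but phrased in terms of intersection numbers in place of linking numbers so that it works without requiring the auxiliary link to be null-homologous. By definition, $\text{rot}_X(\Lambda)$ is the obstruction to extending the unit tangent section $T\Lambda$ of $\xi|_\Lambda$ to a nonvanishing section of $\xi$ over $H_X$. Equivalently, for any nonvanishing section $s$ of $\xi$ defined along $\Lambda$, together with a generic extension $\tilde s$ of $s$ over $H_X$, one has
\[
\text{rot}_X(\Lambda) \;=\; w(T\Lambda,\, s|_\Lambda) \;+\; Z(\tilde s),
\]
where $w$ is the relative winding of $T\Lambda$ with respect to $s$ along $\Lambda$ and $Z$ is the signed zero count of $\tilde s$ in the interior of $H_X$, with orientations fixed consistently with the Euler-class convention.

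First I would take $s$ to be the canonical section of $\xi$ determined by the Morse structure and already employed in the proof of Theorem~\ref{thm:front}. Away from the binding, $\xi$ is close to the tangent plane of the pages, and the projection of the page vector field $V$ to $\xi$ yields a section that vanishes only at the critical points of $V|_{\Sigma_t}$. Near $B$, standard contact-neighborhood coordinates allow $s$ to be extended with a simple zero along the binding. A local analysis at an index-$i$ critical point identifies the sign of the corresponding zero of $s$: index-$0$ points contribute with positive sign and index-$1$ points with negative sign. Hence the signed zero divisor of $s$ equals $B + L_0 - L_1$, which matches $B + \mathcal{L}$ under the convention $\mathcal{L}=L_0 \cup -L_1$.

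Next I would compute the two summands. The winding $w(T\Lambda, s|_\Lambda)$ is determined by the front projection via the same local analysis as in Theorem~\ref{thm:front}: along smooth arcs, $T\Lambda$ projects to a nearly-horizontal direction in $\xi$ with no net winding, while each downward cusp contributes $+\tfrac{1}{2}$ and each upward cusp contributes $-\tfrac{1}{2}$, giving $w=\tfrac{1}{2}(D-U)$. The zero count $Z(\tilde s)$ equals the signed intersection of the zero divisor of $\tilde s$ with $H_X$, namely $B\bullet H_X + \mathcal{L}\bullet H_X$. Because $B$ bounds the page $\Sigma_0$ and is therefore null-homologous in $M$, the first intersection $B\bullet H_X$ is independent of $[H_X]$ and equals $\lk(B,\Lambda)$, whereas $\mathcal{L}\bullet H_X$ genuinely depends on $[H_X]$, as expected.

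Assembling these pieces yields the stated formula, and one sees immediately that in the integer homology sphere setting $\mathcal{L}$ is null-homologous, so $\mathcal{L}\bullet H_X = \lk(\mathcal{L},\Lambda)$ and the formula collapses to Theorem~\ref{thm:front}. The main obstacle is the local sign analysis at the critical points $L_0$ and $L_1$, which must be carried out carefully to justify the convention $\mathcal{L} = L_0 \cup -L_1$; once this is in hand, together with the canonical section $s$ and the cusp winding count inherited from Theorem~\ref{thm:front}, the remainder of the proof reduces to homological bookkeeping.
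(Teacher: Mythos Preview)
Your argument is correct, but it is genuinely different from the paper's. The paper proves Theorem~\ref{prop:front} by a one-line reduction to Theorem~\ref{thm:front}, and Theorem~\ref{thm:front} itself is \emph{not} proved by the obstruction-theoretic decomposition $\rot = w(T\Lambda,s) + Z(\tilde s)$ that you describe. Instead, the paper keeps the section $V$ strictly nonvanishing and moves $\Lambda$ by explicit Legendrian Reidemeister moves: Lemmas~\ref{lem:L1}, \ref{lem:L0}, and \ref{lem:intB} push $\Lambda$ across components of $L_1$, $L_0$, and $B$ respectively (via moves K3, K2, B1), and each lemma verifies by inspection of the front that the quantity $\tfrac12(D-U)+\lk(B\cup\mathcal{L},\Lambda)$ is preserved. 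Once all linking numbers vanish, a Seifert surface disjoint from $B\cup\mathcal{L}$ exists, $V$ trivializes $\xi$ over it, and $\tfrac12(D-U)$ computes the rotation directly. The passage to Theorem~\ref{prop:front} then really is just replacing $\lk$ by intersection with $H_X$.

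Your route has the advantage of being more conceptual and uniform: once the zero divisor of $s$ is identified as $B+\mathcal{L}$, the formula drops out without any case-by-case isotopy, and the Poincar\'e-duality statement of Theorem~\ref{thm:eu} is visibly built in. The cost is exactly the ``main obstacle'' you flag: you must do an honest local index computation at $L_0$, $L_1$, and $B$ to pin down the signs, whereas the paper never computes these indices---it reads the signs off the combinatorics of the front moves. In effect, the paper trades an analytic local model for a diagrammatic check, which fits its overall emphasis on computing everything from $\mathcal{F}(\Lambda)$.
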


	If $\mathcal{L}$ is null-homologous, $\mathcal{L}\bullet H_X=\lk(\mathcal{L},\Lambda)$ and we recover Theorem~\ref{thm:front}. In particular, the rotation number is independent of the choice of Seifert surface if $\mathcal{L}$ is null-homologous.  
	
	Viewed simply as a two-plane field on a $3$-manifold, a contact structure $\xi$ has an Euler class $e(\xi)$.

\begin{thm}\label{thm:eu} The link $\mathcal{L}$ is Poincar\'e dual to  $e(\xi)$.
\end{thm}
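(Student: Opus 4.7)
The plan is to realize $\mathcal{L}$ as the signed zero locus of a generic section of $\xi$ and then invoke the standard identification of such a zero locus with $\mathrm{PD}(e(\xi)) \in H_1(M;\Z)$.

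The natural candidate for the section is the Morse vector field $V$ itself. By construction (see \cite{GaLi15} and Section~\ref{sec:back}), $V$ lies within $\xi$ away from the binding, so it defines a section of $\xi$ over $M \setminus B$. Since $V$ is tangent to each page $\Sigma_t$, its zero set agrees with the critical points of $V|_{\Sigma_t}$ as $t$ varies; the Morse-structure convention that $V$ points outward along $\partial \Sigma_t = B$ rules out index-$2$ critical points, so on $M \setminus B$ the zero set is exactly $L_0 \cup L_1$. To extend $V$ across a tubular neighborhood $N(B)$, I would use that $B$ is positively transverse to $\xi$, which trivializes $\xi|_{N(B)}$; in the standard neighborhood model of the binding, $V$ on $\partial N(B)$ is null-homotopic in the trivialized bundle along each meridian disk and hence extends to a nowhere-zero section across $N(B)$. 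The result is a global section of $\xi$ on $M$ whose zero locus is exactly $L_0 \cup L_1$.

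The local sign at each zero is then computed as follows. Near a point $p \in L_i$, I would use the page $\Sigma_t$ through $p$ as a transverse $2$-disk and trivialize $\xi_p \cong T_p \Sigma_t$; this identification is orientation-preserving because Giroux compatibility forces $d\alpha$ to be a positive area form on each page. Under this trivialization, $V|_{\Sigma_t}$ is a Morse vector field whose local degree at a critical point of Morse index $i$ is $(-1)^i$. Consequently the signed zero locus of $V$ is $[L_0] - [L_1]$, each component oriented by increasing $t$ (the orientation already used to define $L_i$), and this equals $[\mathcal{L}]$.

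The main obstacle will be the sign bookkeeping: one must check that the Poincar\'e-dual orientation of the zero curve---the direction $\nu$ satisfying $TM = \nu \oplus T\Sigma_t$ as oriented bundles, for which the transverse local degree is positive---agrees with ``increasing $t$'' at a point of each $L_i$. This reduces to the local Morse model together with the orientation compatibility between $\xi$ and the page provided by Giroux's condition.
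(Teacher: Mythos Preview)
Your zero-locus approach is sound in outline and genuinely different from the paper's proof, which instead derives the result as a corollary of Theorem~\ref{prop:front}: writing a closed surface $S=H-H'$ as a difference of Seifert surfaces for a single Legendrian $\Lambda$ and subtracting the two rotation numbers, every term depending only on $\Lambda$ cancels and one is left with $\langle e(\xi),S\rangle=\mathcal{L}\bullet S$. Your argument is more direct and makes transparent why $\mathcal{L}$ is the relevant link, independently of the rotation-number machinery.

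There is, however, a concrete error in your extension step: the claim that $V|_{\partial N(B)}$ is null-homotopic along each meridian disk is false. In the standard model $N(B)\cong D^2\times S^1$ with $\alpha=dt+x\,dy-y\,dx=dt+r^2\,d\theta$ and pages $\{\theta=\mathrm{const}\}$, the vector field $V$ is approximately $\pm\partial_r$ near the binding. In the global trivialization of $\xi$ by $e_1=\partial_x+y\,\partial_t$ and $e_2=\partial_y-x\,\partial_t$, one computes $\partial_r=\cos\theta\,e_1+\sin\theta\,e_2$, which has winding number $\pm1$ (not $0$) around the boundary of a meridian disk $D^2\times\{t_0\}$. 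Hence any extension of $V$ to a section of $\xi$ over $N(B)$ must vanish along a curve isotopic to that binding component. This is exactly why the rotation formulas in Theorems~\ref{thm:front} and~\ref{prop:front} contain a $\lk(B,\Lambda)$ term alongside the $\mathcal{L}$ term. The fix within your framework is immediate: each component of $B$ bounds a page, so $[B]=0$ in $H_1(M)$ and the extra zeros contribute nothing to the homology class of the zero locus; equivalently, every class in $H_2(M)$ is represented by a closed surface disjoint from $B$, so the section on $M\setminus B$ already suffices to evaluate $e(\xi)$.
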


The proof of Theorem~\ref{thm:eu} is essentially a corollary of Theorem~\ref{prop:front}.  In Section~\ref{sec:eucomp}, we show how to compute $[\mathcal{L}]\in H_1(M)$ from the Morse diagram for $M$. Although we do not explore this further in the present work, we note that the computational techniques described in Section~\ref{sec:eucomp} may also be applied to spinnable foliations as in \cite{KMMM09}.

Turning Theorems~\ref{thm:front} and \ref{prop:front} into effective tools requires computing intersection numbers between Legendrian curves and Seifert surfaces representing prescribed homology classes.  The combinatorial formulas which accomplish this are easier to state once further notation has been introduced, so we offer the next theorem as a place holder for the more precise version stated and proven in Section~\ref{sec:linking}.

\begin{thm}\label{thm:linking1} Suppose $\Lambda_1$ is contained and null-homologous in $M\setminus \Sigma_0$.  Then the intersection number between $\Lambda_2$ and a Seifert surface for $\Lambda_1$ may be computed combinatorially from the front projections of the $\Lambda_i$.
\end{thm}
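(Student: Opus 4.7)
The plan is first to reduce to a setting where both Legendrians lie in the cylinder $M\setminus N(\Sigma_0)\cong \Sigma\times I$. The hypothesis already places $\Lambda_1$ and its Seifert surface there (up to a choice of representative for $[F]\in H_2(\Sigma\times I, \Lambda_1)$, which is trivial since the cylinder deformation retracts onto $\Sigma$ and $\Lambda_1$ is null-homologous). For $\Lambda_2$, the Remark~6.3 of \cite{GaLi15} invoked in the discussion preceding Lemma~\ref{lem:Xdet} lets us isotope $\Lambda_2$ through Legendrians until it too is disjoint from $\Sigma_0$. After this reduction the quantity $\Lambda_2 \bullet F$ equals the intersection number of $\Lambda_2$ with $F$ computed in $\Sigma\times I$, so the problem becomes: extract this intersection number from $\mathcal{F}(\Lambda_1)$ and $\mathcal{F}(\Lambda_2)$.

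Next, I would build $F$ page-by-page, adapted to the open book structure. Choose a generic parameter $t$ so that $\Lambda_1\pitchfork \Sigma_t$; by the null-homology hypothesis the signed points $\Lambda_1\cap \Sigma_t$ bound a compact $1$-chain $\gamma_t\subset \Sigma_t$ whose union over $t$ sweeps out a Seifert surface $F$. Since both the $V$-flow to the projection tori and the sequence of intersections $\Lambda_1\cap \Sigma_t$ are encoded in $\mathcal{F}(\Lambda_1)$ (each strand of the front projection records the arc between adjacent page-crossings, and the cusps record binding-windings), the family $\gamma_t$ can be reconstructed up to isotopy from the front projection using a page-level analogue of Seifert's algorithm. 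In parallel, $\mathcal{F}(\Lambda_2)$ encodes when and where $\Lambda_2$ crosses each page.

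With $F$ in place, the intersection $\Lambda_2\bullet F$ decomposes as a sum over pages: each point of $\Lambda_2\cap \Sigma_t$ contributes $\pm 1$ according to whether it meets $\gamma_t$. Pushing this count to the projection tori via $V$-flow expresses it as a combination of (i) crossings between strands of $\mathcal{F}(\Lambda_1)$ and $\mathcal{F}(\Lambda_2)$, (ii) contributions from cusps of $\mathcal{F}(\Lambda_i)$, and (iii) linking corrections with the binding $B$ and the critical-point link $\mathcal{L}$. Signs are determined by the orientations of $\Lambda_1$, $\Lambda_2$, the page, and $V$.

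The main obstacle will be the combinatorial bookkeeping in the last step: verifying that the count assembled from the front projections agrees with $\Lambda_2\bullet F$ on the nose, including signs. In particular, one must show that page-level intersections $\gamma_t \pitchfork \Lambda_2$ are faithfully detected by crossings in the projection (rather than being created or destroyed when one flows along $V$ across the skeleton), and that contributions from the binding and from the index-$0,1$ critical points of $V|_{\Sigma_t}$ — exactly the pieces hidden by the projection — are accounted for by the linking and intersection terms with $B$ and $\mathcal{L}$. This is analogous to the role played by these same terms in Theorems~\ref{thm:front} and \ref{prop:front}, which suggests the framework will be parallel, but the details of reading $\gamma_t$ off $\mathcal{F}(\Lambda_1)$ require a careful case analysis and comprise the bulk of Section~\ref{sec:linking}.
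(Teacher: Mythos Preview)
Your sketch diverges from the paper's argument and contains a real gap. The paper does \emph{not} slice by pages; instead it cuts $M$ along the skeleton of the Morse structure, so that $\Lambda_1$ breaks into arcs in a disjoint union of standard solid tori, and then closes these arcs up by adding segments in the skeleton (visible on the Morse diagram as pieces parallel to the trace curves, with combinatorially assigned multiplicities). Resolving the crossings of the resulting diagram --- the \emph{total resolution} --- yields Seifert discs exactly as in the classical algorithm, and the intersection number $\Lambda_2\bullet H$ is then the signed count of crossings where $\mathcal{F}(\Lambda_2)$ passes over this total resolution. No corrections involving $B$ or $\mathcal{L}$ enter; those terms belong to the rotation-number formulas, not to the linking computation, and there is no need to isotope $\Lambda_2$ off $\Sigma_0$.

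The gap in your page-by-page proposal is the construction of the family $\gamma_t$. For each fixed $t$ the $0$-chain $\Lambda_1\cap\Sigma_t$ does bound some $1$-chain in $\Sigma_t$, but the choice is ambiguous by an element of $H_1(\Sigma)$, and you give no mechanism for making these choices fit together into an embedded surface, let alone for reading that mechanism off $\mathcal{F}(\Lambda_1)$. Your description of the front is also off: a strand of $\mathcal{F}(\Lambda_1)$ at height $t$ records a point of $\Lambda_1\cap\Sigma_t$, not an ``arc between adjacent page-crossings,'' and cusps mark tangencies of $\Lambda$ with $V$ (the source of the rotation term $\tfrac{1}{2}(D-U)$), not binding windings. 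A page-sliced approach could perhaps be made to work, but it is essentially the Lagrangian-projection picture of Section~\ref{sec:lagr} rather than a front-projection computation, and it would require the missing coherence argument for the $\gamma_t$ before any combinatorial formula could be extracted.
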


Note that for an arbitrary $\Lambda$ which is null-homologous in $M$ and a specified homology class of Seifert surface, the addition of the link $X$ described before Lemma~\ref{lem:Xdet} ensures that $\Lambda_1:=\Lambda \cup X$ satisfies the hypotheses of Theorem~\ref{thm:linking1}.

In the final section of the paper, we introduce a notion of Lagrangian projection for Legendrians disjoint from a fixed page $\Sigma_0$.  The existence of a Morse structure requires a \textit{monodromy vector field} on $M\setminus B$.  If $\Lambda$ is disjoint from $\Sigma_0$, we may push $\Lambda$ along the monodromy vector field to $\Sigma_0$; we call the image  of this operation the Lagrangian projection $\Lambda_\Sigma$ and we show that both the Thurston-Bennequin and rotation numbers can be computed combinatorially from $\Lambda_\Sigma$.

\begin{proposition}\label{lem:classicalInvariantsInLagrangian} Suppose that $\Lambda$ is a Legendrian knot contained and null-ho\-mo\-lo\-gous in $\Sigma\times I$. Then we have the following:

\begin{enumerate}
\item  The Thurston-Bennequin number of $\Lambda$ is 
\begin{equation*}
\tb(\Lambda)=\writhe_\Sigma(\Lambda_\Sigma); \text{ and}
\end{equation*}

\item the rotation number is \[\rot(\Lambda)=\rot_{\Sigma}(\Lambda_\Sigma)= \rot_{V_0}(\Lambda_\Sigma) + \mathcal{L}\bullet H_\emptyset\] where $\rot_{\Sigma}(\Lambda_\Sigma)$ is the rotation computed using an arbitrary non-vanishing vector field on $\Sigma$, $\rot_{V_0}(\Lambda_\Sigma)$ is the rotation computed with respect to the vector field coming from the Morse structure, and $H_\emptyset$ is a Seifert surface disjoint from $\Sigma_0$.

\end{enumerate}
\end{proposition}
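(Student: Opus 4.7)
The plan is to use a monodromy pushoff to identify $\tb$ with the writhe of the Lagrangian projection and then to use the monodromy projection to identify $\xi|_{\Sigma\times I}$ with the pullback of $T\Sigma$, so that $\rot(\Lambda)$ reduces to a planar rotation number on $\Sigma$.

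For Part (1), I would arrange the contact form adapted to the open book so that the Reeb direction coincides (up to positive scaling) with the monodromy vector field $\partial_t$ on $\Sigma\times I$; this is a standard feature of contact forms supported by open books. A small pushoff $\Lambda^+$ of $\Lambda$ along $\partial_t$ is then everywhere transverse to $\xi$, so the annulus it sweeps induces the contact framing on $\Lambda$ and $\tb(\Lambda)=\lk(\Lambda,\Lambda^+)$. Both $\Lambda$ and $\Lambda^+$ project under $\pi\co \Sigma\times I\to \Sigma_0$ to the same curve $\Lambda_\Sigma$. At each transverse double point of $\Lambda_\Sigma$ the two strands of $\Lambda$ sit at different $t$-heights; a signed count of these crossings, using an intermediate page as the Seifert-like surface between $\Lambda$ and $\Lambda^+$, computes $\lk(\Lambda,\Lambda^+)$ and is precisely $\writhe_\Sigma(\Lambda_\Sigma)$, exactly as in the classical Lagrangian-projection argument in $(\R^3,\xist)$.

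For Part (2), I would first verify that $\pi_\ast\co \xi\to \pi^\ast T\Sigma$ is an orientation-preserving isomorphism of rank-two bundles over $\Sigma\times I$: at points of $L_0\cup L_1$ the plane $\xi$ equals the tangent plane to the page, and elsewhere $\xi$ is transverse to the page while $\partial_t$ is transverse to $\xi$, so $\pi_\ast|_\xi$ has full rank. Since $\Sigma$ has boundary, $T\Sigma$ is trivial; pull back any non-vanishing section $W$ of $T\Sigma$ to a non-vanishing section $\pi^\ast W$ of $\xi$ over $\Sigma\times I$, and use its restriction to $H_\emptyset$ as the trivialization that defines $\rot(\Lambda)$. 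Under $\pi_\ast(T\Lambda)=T\Lambda_\Sigma$, the winding of $T\Lambda$ against $\pi^\ast W$ equals the winding of $T\Lambda_\Sigma$ against $W$, giving $\rot(\Lambda)=\rot_\Sigma(\Lambda_\Sigma)$ independently of the choice of non-vanishing $W$, since any two such sections are homotopic on a surface with boundary.

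For the final equality, $\rot_\Sigma(\Lambda_\Sigma)-\rot_{V_0}(\Lambda_\Sigma)$ is the winding of $V_0/W$ along $\Lambda_\Sigma$, which by Poincar\'e--Hopf equals the signed count of zeros of $V_0$ inside any 2-chain $F\subset\Sigma$ with $\partial F=\Lambda_\Sigma$. The Morse-structure convention excludes index-two critical points on pages with non-empty boundary, so the zeros of $V_0$ are the points of $L_0\cap\Sigma_0$ (local index $+1$) and $L_1\cap\Sigma_0$ (local index $-1$); this contributes $|L_0\cap F|-|L_1\cap F|$. Taking $F=\pi(H_\emptyset)$ and using a transverse-position argument, I would match this planar count with the spatial intersection numbers $L_0\bullet H_\emptyset$ and $L_1\bullet H_\emptyset$ in $\Sigma\times I$, which identifies the correction term with $\mathcal{L}\bullet H_\emptyset$. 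The hardest step is the orientation bookkeeping in the bundle identification $\xi\cong\pi^\ast T\Sigma$: matching the orientation of $\xi$ to the page orientation at the critical loci $L_0$ and $L_1$ has to be aligned with the sign conventions used to define $\mathcal{L}=L_0\cup -L_1$, and a secondary subtlety is relating $|L_i\cap F|$ in $\Sigma$ to $L_i\bullet H_\emptyset$ in $\Sigma\times I$ when the monodromy permutes components of $L_i$, which I would resolve by lifting $F$ back to $\Sigma\times I$ via $\pi$ and checking that the resulting discrepancy is a boundary.
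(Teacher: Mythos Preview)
Your argument is correct and follows essentially the same route as the paper. For Part~(1) both you and the paper use the $\partial_t$ push-off and read the linking number off the Lagrangian projection; for the first equality in Part~(2) both identify $\xi|_{\Sigma\times I}$ with $\pi^*T\Sigma$ (the paper phrases this as ``any trivialization of $T\Sigma$ can be interpreted as a trivialization of $\xi$'', while you make the bundle map $\pi_*$ explicit). The only cosmetic difference is in the correction term: the paper isotopes the zeros of $V_0$ off the projected Seifert surface one at a time and reads off the $\pm 1$ contributions from a local picture, whereas you invoke Poincar\'e--Hopf to get the same signed count in one stroke; these are the same computation packaged differently. Your observation that $\xi=T_p\Sigma_t$ at points of $L_0\cup L_1$ (since the Liouville condition $\iota_{V_t}d\alpha=\alpha|_{\Sigma_t}$ forces $\alpha|_{T\Sigma_t}=0$ where $V_t$ vanishes) is a nice way to see that $\pi_*|_\xi$ is globally an isomorphism, and the orientation and monodromy-permutation subtleties you flag are handled in the paper simply by working with the projection of a fixed $H_\emptyset$ rather than lifting a chain from $\Sigma$.
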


Section~\ref{sec:lagr}  explains how to compute $\mathcal{L}\bullet H_\emptyset$ directly from a Lagrangian projection and discusses the relationship between the formulas in the front and the Lagrangian projection.

\begin{remark}
Although the  results in this paper are generally stated for knots, they extend easily to the case of links.  
\end{remark}

\subsection*{Acknowledgments}

Part of this research was carried out during a visit of the third author to Cologne, and we gratefully acknowledge the support of the graduate school of the Mathematical Institute of the University of Cologne.  The second author is supported by the Berlin Mathematical School.

\section{Background and conventions}\label{sec:back}

Suppose that $(B,\pi)$ is an open book supporting a contact structure $\xi$ on $M$. A \textit{Morse structure} compatible with $(B,\pi, \xi)$ is a pair $(F,V)$ consisting of a certain smooth function $F$ and vector field $V$.  The function $F$ restricts to each page as a Morse function $f_t$ with a single index $0$ critical point, finitely many index $1$ critical points, and no index $2$ critical points.  The vector field $V$ is tangent to each page $\Sigma_t$, and the restriction $V_t$ is both gradient-like for $f_t$ and Liouville for a symplectic form on $\Sigma_t$ associated to $\xi$. The precise definition, which can be found in~\cite{GaLi15}, includes a number of technical conditions that  control the behavior at the binding and  how $(f_t, V_t)$ evolves with $t$.  For the present purposes, it's enough to note that the ascending and descending manifolds of $V$ intersect transversely in level sets, except on finitely many pages where handle slides occur.

We assume throughout that all Morse structures are compatible with contact structures supported by the given open book; note that this combines Definitions 3.1 and 4.1 in \cite{GaLi15}.

We denote an open book with a compatible Morse structure by $(B, \pi, F, V)$, and the \textit{skeleton} of $(B, \pi, F, V)$ is the union of flowlines of $V$ between index $0$ and index $1$ critical points of $f_t$.  The intersection of the skeleton with each page is a finite graph, while the union over all $t$ is a $2$-complex.  The \textit{co-skeleton} of $(B, \pi, F, V)$ is the union of flowlines of $V$ between index $1$ critical points of $f_t$ and $B$.  We remark that in this language, the link $\mathcal{L}$ is the singular set of the complex Skel $\cup$ Co-Skel,  appropriately oriented.

 The \textit{Morse diagram} associated to a Morse structure is a collection of embedded tori $\partial N(B)=\coprod T_i$, decorated with $\coprod T_i \cap \text{Co-Skel}$.  For generic $t$-values, the co-skeleton intersects the collection of tori in $2k$ disjoint points, identified in pairs associated to flowlines from the same index $1$ critical point.  As $t$ varies, these trace out paired curves on the Morse diagram.  These curves are continuous except at $t$-values corresponding to handle slides in $f_t$, where the sliding co-core breaks into two points which are coincident with another pair.  See Figure~\ref{fig:firstex}.   We call these coincident endpoints \textit{teleporting points}.

\begin{figure}[h!]
	\centering
	\includegraphics[width=\textwidth]{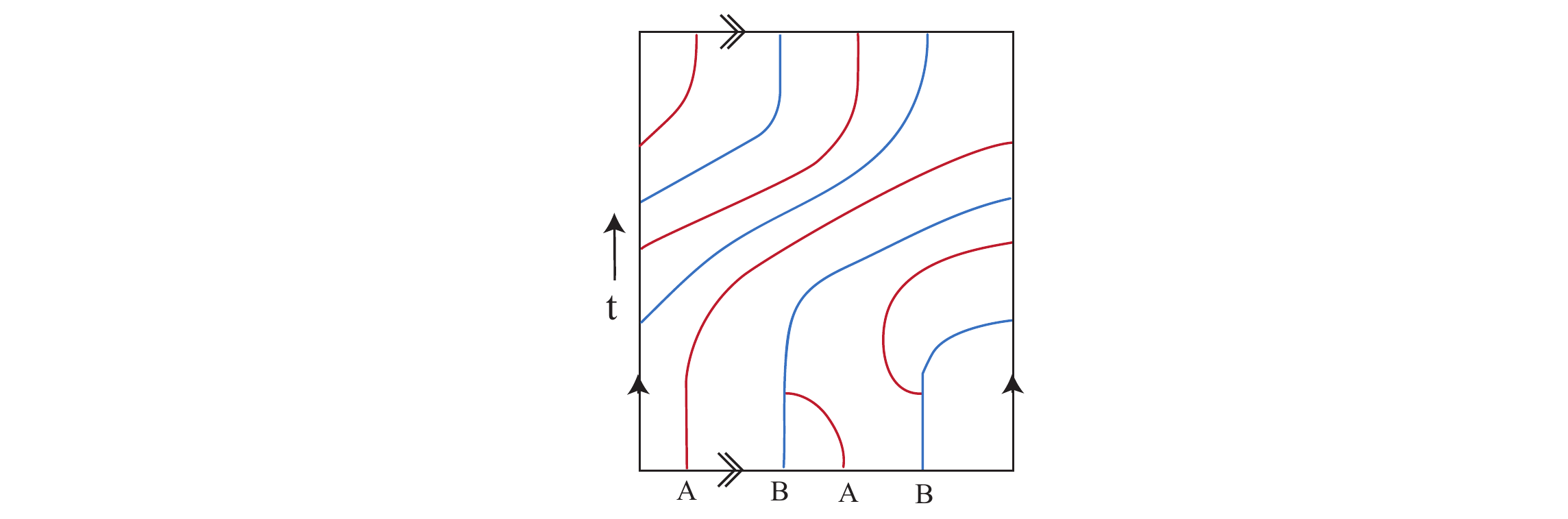}
	\caption{A Morse diagram for an open book whose page is a once punctured torus.  There is one handle slide followed by a boundary-parallel Dehn twist.}
	\label{fig:firstex}
\end{figure}

Propositions 3.3 and  4.3 in \cite{GaLi15} show that every open book has a Morse structure compatible with the supported contact structure, and Proposition 3.7 in~\cite{GaLi15} states that the original contact manifold can be recovered from the Morse diagram. Theorem 1.1 in~\cite{GaLi15} states that in any $(B, \pi, F, V)$, the complement of the skeleton is a collection of  solid tori. Even more explicitly, after removing both the binding and the skeleton, each connected component is contactomorphic to \begin{equation}\label{stdtori}\big((0, \infty) \times S^1 \times S^1,   \ker(dz + x\ dy)\big).\end{equation}
Here $x\in (0, \infty)$ and $y,z\in S^1$. We note for later use that this contact manifold is a quotient of the subset $\{x>0\}$ in $(\mathbb{R}^3, \xi_\text{std})$, and the contactomorphism maps $z$ in $\mathbb{R}^3$ to $t$ in $M$.

We suppose throughout that any Legendrian knot $\Lambda$ in $(B, \pi, F, V)$ is disjoint from $B$ and transverse to the skeleton.  This is a generic condition and can easily be achieved by Legendrian isotopy. In this case, each point on $\Lambda \setminus \text{Skel}$ has a well-defined image on the Morse diagram under the flow by $\pm V$ to $\partial N(B)$; we call this image the \textit{front projection} and denote it  by $\mathcal{F}(V)$.  This front projection is smooth except at isolated cusps, and the $\frac{\partial t}{\partial x}$ slope lies in $(-\infty, 0]$.  (Here $x_i\in S_1$ parametrizes the binding component $B_i$.) At points where $\Lambda$ intersects the skeleton, $\mathcal{F}(\Lambda)$  teleports between paired trace curves.  The basic properties of the front projection are derived from the local contactomorphism of Theorem 1.1, and Theorem 1.5 in~\cite{GaLi15} establishes a complete set of front Reidemeister moves, which are shown in Figure~\ref{fig:reid}.  We remark that the move K3 shown here differs from ---and corrects--- the K3 move in the published version of \cite{GaLi15}; the change is  explained in \cite{GaLi19}.
\begin{figure}[htbp]
	\centering
	\includegraphics[width=\textwidth]{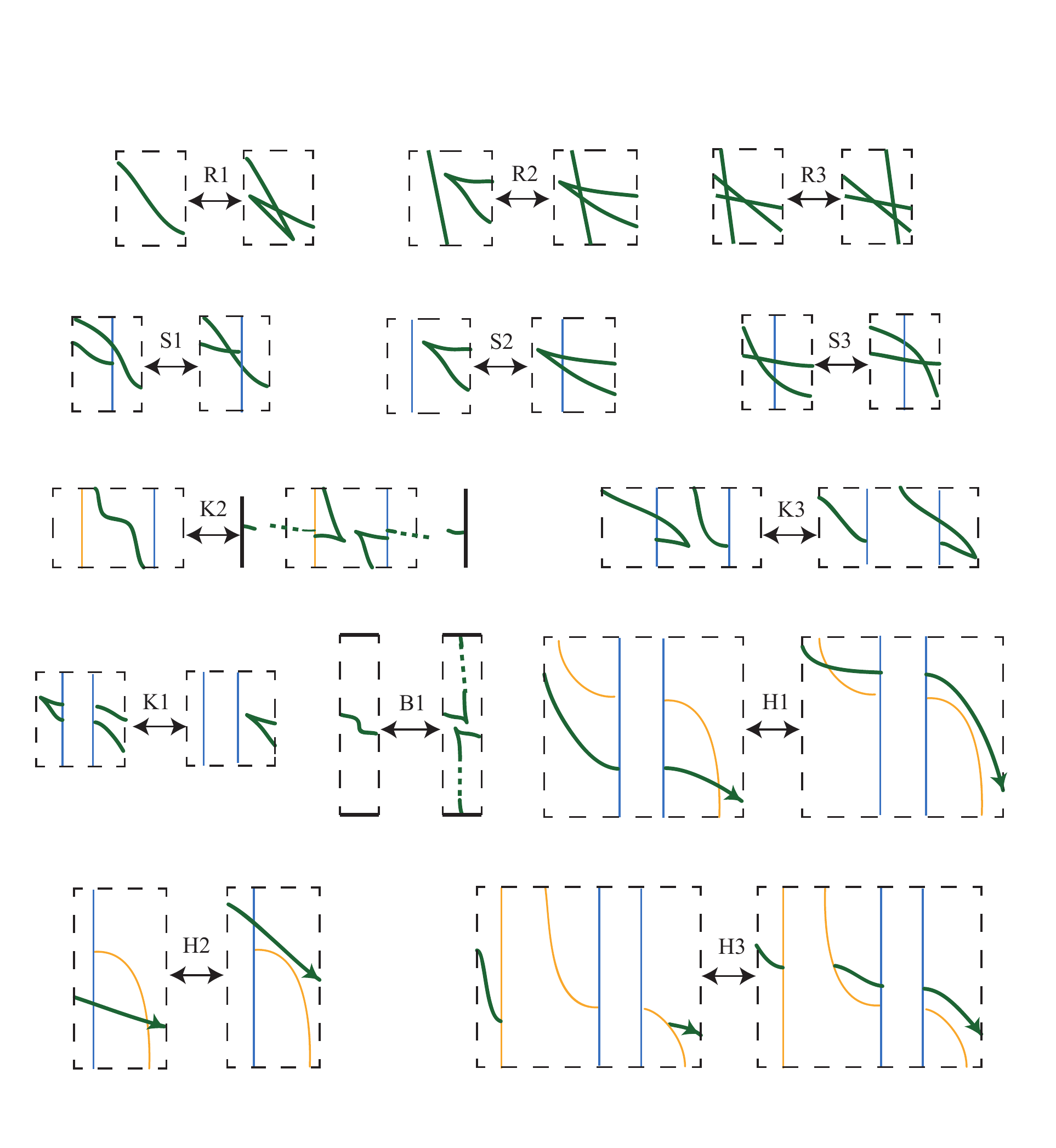}
	\caption{The Legendrian front Reidemeister moves}
	\label{fig:reid}
\end{figure}

Finally, for use in Section~\ref{sec:eucomp}, we recall the process for computing various first homology classes  from the data on the Morse diagram.  Assign paired trace curves on the Morse diagram opposite vertical orientations; this corresponds to orienting the dual core curves as generators of $H_1(\Sigma)$. Specifically, at an index one critical point, the pair (oriented core, co-core ray oriented towards $B$) should form a positive coordinate system with respect to the oriented page. Near $t=0$, label each trace curve on the Morse diagram with the corresponding homology generator.  When a trace curve labeled $A$ teleports across a trace curve labeled $B$ with the same orientation, their new labels above the teleporting points remain $A$ and become $A+B$, respectively.  If the orientations are opposite, the new labels are $A$ and $A-B$.  On $\Sigma_0$, select a flowline $\gamma_i$ from the index zero critical point $c_0$ to a marked point $p_i$ on each boundary component;  we identify $p_i$ with the left edge of the $i^{th}$ component of the Morse diagram.  Label the vertical edge by $P_i$ at $t=0$ and adjust the label by adding $\pm A$ when the trace curve labeled $A$ crosses it, with the same sign conventions as in the case of teleports. 

We have the following:

\begin{lemma}[Lemma 7.1, \cite{GaLi15}]\label{lem:homcomp}\hfill
\begin{enumerate}
\item $H_1(M) $ is generated by the trace curve labels;
\item For each $i$, $[\gamma_i-\phi_*(\gamma_i)] \in H_1(M)$ is computed by subtracting the bottom label on the left edge of the associated component from the top label of the edge.

\item The kernel of the map $H_1(\Sigma)\rightarrow H_1(M)$ is  generated by the set of differences  \[ 
[\gamma_i-\phi_*(\gamma_i)]-[\gamma_j-\phi_*(\gamma_j)]
\]
taken over all possible pairs $i, j$.    
 \item For $\Lambda\subset M\setminus \Sigma_0$, $[\Lambda]\in H_1(M)$ is the signed sum of labels of intersections between $\mathcal{F}(\Lambda)$ and the trace curves.
\end{enumerate}
\end{lemma}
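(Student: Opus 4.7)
My plan is to derive all four parts from a single CW decomposition of $M$ induced by the Morse structure. On each page $\Sigma_t$, the Morse function $f_t$ yields cells consisting of the index-$0$ critical point $c_0(t)$, the descending arcs from each index-$1$ critical point to $c_0(t)$, and complementary $2$-cells which sweep out solid tori around the binding components. Sweeping $t$ across $[0,1]$ and gluing via $\phi$ (with extra $2$-cells inserted at each handle slide) presents $M \setminus N(B)$ as a cell complex; attaching meridional $2$-disks along each binding component $B_i$, together with the arcs $\gamma_i$, completes the CW structure for $M$. In the resulting $1$-chain group, the cocore sweeps give the trace curves (with labels the cocore classes in $H_1(\Sigma)$), alongside the auxiliary $\gamma_i$ and $B_i$.

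Parts (1), (2), and (4) should then flow almost mechanically. For (2), tracking the label of the $1$-cell ending at $\partial_i \Sigma_t$ as $t$ increases from $0$ to $1$ yields a signed sum of trace curve labels crossed, which is exactly the top-minus-bottom difference on the left edge of the $i$-th component of the Morse diagram; this signed sum is a representative of $[\gamma_i - \phi_*(\gamma_i)]$ first in $H_1(\Sigma_0)$ and hence in $H_1(M)$. Part (1) then follows once I check that the auxiliary generators $\gamma_i$ and $B_i$ reduce to trace curve labels modulo $2$-cell boundaries: each $B_i$ is homologous via the swept disk $\gamma_i \times [0,1]$ to the basepoint loop $\{c_0\} \times [0,1]$ plus $-[\gamma_i - \phi_*(\gamma_i)]$, and part (2) converts the latter into trace curve labels. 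For (4), the front projection $\mathcal{F}(\Lambda)$ is a cellular approximation of $\Lambda$ on the $1$-skeleton sitting inside $\partial N(B)$, and its expression in trace curve generators is precisely the signed count of skeleton intersections crossed by $\Lambda$, which is exactly the signed count of trace curve intersections visible on the Morse diagram.

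The main obstacle is part (3), pinning down the kernel of $H_1(\Sigma) \to H_1(M)$ precisely. The $2$-cells from sweeping index-$1$ cores across the mapping torus produce the monodromy relations $(\phi_* - \mathrm{id})(\alpha) = 0$, while the meridional disks at each $B_i$ identify the binding loop modulo $[\gamma_i - \phi_*(\gamma_i)]$. The key observation is that every loop $\{\mathrm{pt}\} \times [0,1]$ in the mapping torus is homologous to every other (since $\Sigma$ is connected), so in $H_1(M)$ all the classes $[\gamma_i - \phi_*(\gamma_i)]$ collapse to a common value, placing the pairwise differences into the kernel of $H_1(\Sigma) \to H_1(M)$. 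Showing these differences generate the entire kernel --- rather than merely being contained in it --- is the subtle point; I would verify this by matching ranks against a Mayer--Vietoris computation of $H_1(M)$ from the splitting $M = N(B) \cup T_\phi$, checking that the stated relations suffice to account for every relation present after both the monodromy $2$-cells and the meridional disks are attached.
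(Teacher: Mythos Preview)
This lemma is not proved in the present paper; it is quoted from Gay--Licata \cite{GaLi15} as their Lemma~7.1 and recorded here purely as background, with no argument supplied. There is therefore no proof in this paper against which to compare your attempt, and a genuine comparison would require consulting the original source.

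That said, your cellular approach is the natural one: sweep the pagewise handle decomposition through the mapping torus, add the meridional $2$-cells at the binding, and read off generators and relations. Your sketches for parts~(1), (2), and~(4) are sound and essentially routine once the CW structure is in place.

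For part~(3), however, your outline has a real gap, and the Mayer--Vietoris check you propose would expose it rather than close it. You correctly list two families of relations: the mapping-torus relations $(\phi_*-\mathrm{id})(\alpha)=0$ for $\alpha\in H_1(\Sigma)$, and the binding relations forcing all $[\gamma_i-\phi_*(\gamma_i)]$ to coincide in $H_1(M)$. Both families lie in the kernel of $H_1(\Sigma)\to H_1(M)$. But the generating set asserted in the statement contains only the pairwise differences $[\gamma_i-\phi_*(\gamma_i)]-[\gamma_j-\phi_*(\gamma_j)]=(\mathrm{id}-\phi_*)[\gamma_i\cdot\bar\gamma_j]$, and there is no reason the loops $\gamma_i\cdot\bar\gamma_j$ should span $H_1(\Sigma)$. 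Indeed, when $\Sigma$ has a single boundary component there are no nontrivial differences at all, yet $\operatorname{im}(\mathrm{id}-\phi_*)$ is typically nonzero---the paper's own Example~\ref{ex:l0} (a once-punctured torus) has one binding component and a rank-one kernel. So either part~(3) as transcribed here is meant to be read modulo monodromy relations already implicit in the presentation of part~(1), or the precise statement in \cite{GaLi15} carries additional context you would need to import before your argument can conclude.
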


Example~\ref{ex:exann} illustrates the labeling conventions and Example~\ref{ex:l0} includes a computation of $H_1(M)$ from a Morse diagram. 

\section{Computing the rotation and Euler numbers}

In this section we prove Theorems~\ref{thm:front} and \ref{prop:front}.  In each case, the result follows from constructing a Seifert surface for $\Lambda$ that lives in the complement of the vanishing locus of the vector field $V$.  Recall that $L_i$ denotes the link formed by index $i$ critical points of $V_t$ and  $\mathcal{L}$ was defined as $L_0 \cup - L_1$. In the case of a homology $3$-sphere, we compute the linking between   $\Lambda$ and $B\cup\mathcal{L}$, while the proof of Theorem~\ref{prop:front} follows from replacing linking numbers  with intersection numbers between curves and surfaces. 

\subsection{Homology $3$-spheres}

The quantity $\frac{1}{2}(D-U)+\lk(B \cup \mathcal{L},\Lambda)$ is preserved by all the Legendrian front Reidemeister moves, so it is immediate that this is an invariant of $\Lambda$ with respect to the given Morse structure on the open book.  However, without a combinatorial description of the space of all Morse structures supporting a given contact structure, it is not possible to prove directly that the quantity computed above is independent of the choices involved.

\begin{proof}[Proof of Theorem~\ref{thm:front}]
The  main idea in the proof follows Section~2.6.2 in~\cite{Et05}. Namely, in the complement of $B\cup\mathcal{L}$, the vector field $V$ is a non-vanishing section of the contact structure. We argue that when $\lk(B\cup\mathcal{L},\Lambda)=0$, $V$ defines a trivialization for $\xi$ over some  Seifert surface for $\Lambda\subset M\setminus (B\cup\mathcal{L})$.  In this case, the signed count of down and up cusps computes the rotation number just as in $\mathbb{R}^3$. Namely, cusps in the front projection correspond to tangencies between $T\Lambda$ and $V$, which in turn correspond to the intersections between the image of $T\Lambda$ and the $x$-axis in the trivialised bundle.  Just as in $(\mathbb{R}^3, \xi_\text{std})$, the vertical direction a cusp is traversed determines the sign of the corresponding intersection point, so the winding number is computed as half the difference between the number of down and up cusps.  

To apply this  argument, we must show that $\Lambda$ admits a Seifert surface disjoint from $\mathcal{L}$. 

This is accomplished by Lemmas \ref{lem:L1}, \ref{lem:L0},  and \ref{lem:intB} below, which collectively describe how to eliminate intersection points with the three types of components that constitute $B\cup\mathcal{L}$.   In each case,  the change in linking number is offset by the appropriate change in the signed cusp sum $D-U$, ensuring that the quantity computed by the formula  is preserved.  
\end{proof}

\begin{lemma}\label{lem:L1} Fix a component $L$ of $L_1$.  Then there is a Legendrian isotopy supported in  $M\setminus (B\cup\mathcal{L}\setminus L)$ that passes $\Lambda$ across $L$ and preserves the sum  $\frac{1}{2}[D-U]+\text{lk}( - L_1,\Lambda).$
\end{lemma}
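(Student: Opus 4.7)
The plan is to construct the required Legendrian isotopy as an explicit local move in a tubular neighborhood of $L$ and to verify by a local computation that the change in linking with $-L_1$ is exactly offset by the change in signed cusp count. Conceptually, this is the analog in our setting of the standard move in $(\R^3,\xist)$ by which a Legendrian strand is pushed across a region where the reference vector field fails to trivialize the contact planes; compare Section~2.6.2 of \cite{Et05}.

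First, I would set up a normal form near $L$. The component $L$ is a smoothly embedded circle of non-degenerate index-$1$ critical points of $V_t$, so there is a tubular neighborhood $N$ of $L$, disjoint from $B\cup(\mathcal{L}\setminus L)$, with coordinates $(x,y,t)$ in which $L=\{x=y=0\}$, each slice $\{t=\mathrm{const}\}$ inherits the saddle form $V_t=x\partial_x-y\partial_y$, and the axes $\{y=0\}$ and $\{x=0\}$ are pieces of the co-skeleton and skeleton, respectively. Away from the skeleton, the standard model \eqref{stdtori} applies, so the front projection is already defined and behaves like the standard Legendrian front projection on each of the four quadrants of $N\setminus L$.

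Second, I would exhibit an explicit one-parameter family $\Lambda_s$ of Legendrian embeddings, supported in $N$, that carries a short nearly-vertical strand of $\Lambda$ across the $t$-axis. Maintaining the Legendrian condition during such a deformation reduces to a local ODE in the contact form expressed in the coordinates above; no topological obstruction arises because $L$ is a circle of critical points of $V_t$ but not of the contact form itself. The resulting isotopy changes $\lk(L_1,\Lambda)$ by $\pm 1$, and hence $\lk(-L_1,\Lambda)$ by $\mp 1$.

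Third, I would verify the compensating change in cusps. As the strand passes from one quadrant of the saddle to an adjacent one, the flow direction ($+V$ versus $-V$) required to reach $\partial N(B)$ changes, so the front projection must either gain a cusp pair or reverse the orientation of existing cusps along a short arc. A direct calculation in the local model shows that the net change $\Delta(D-U)$ equals $\pm 2$ with the sign chosen so that $\tfrac{1}{2}(D-U)+\lk(-L_1,\Lambda)$ is preserved. The main obstacle is aligning the various sign conventions---the orientation of $L_1$ induced by increasing $t$, the sign of each cusp determined by the $\partial_t$-direction of the front, and the sign of the crossing between $\Lambda$ and $L$---so that these three effects combine correctly; the local model reduces this to a finite sign check. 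Since the isotopy is supported in $N$, the hypothesis on its support is automatic.
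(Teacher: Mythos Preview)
Your approach is genuinely different from the paper's. The paper works entirely in the front projection, realizing the crossing of $L$ as an explicit sequence of already-established Legendrian front Reidemeister moves: two K1 moves create cusps and one of them is passed across a trace curve, an S1 move rearranges the picture, and finally a K3 move---which is precisely the move encoding passage of $\Lambda$ across a component of $L_1$---completes the crossing. The cusp count and the change in linking are then read off the sequence of diagrams with no further geometric analysis, since the contact-geometric content of each move was already verified in \cite{GaLi15}.

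Your plan instead proposes to redo this analysis directly in a tubular neighborhood of $L$. This is reasonable in spirit, but as written it is a sketch rather than a proof, and one step is more delicate than you acknowledge. You specify the vector field $V_t=x\,\partial_x-y\,\partial_y$ in your local model but never the contact form; without it, neither the Legendrian condition nor the front projection is determined, so your assertion that ``maintaining the Legendrian condition reduces to a local ODE'' cannot be checked. More seriously, the front projection is not simply standard on each of the four quadrants of $N\setminus L$: the quadrant boundaries along $\{x=0\}$ lie in the skeleton, where $\mathcal{F}(\Lambda)$ teleports between the two paired trace curves associated to $L$, and tracking the front as the strand passes through the origin means controlling this teleporting behavior on both trace curves simultaneously. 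The paper's K3 move packages exactly this interaction, and the fact that two preparatory K1 moves are needed before K3 can even be applied indicates that the front does not merely ``gain a cusp pair'' in a single step. In short, the paper's combinatorial route is shorter because it outsources the hard local analysis to \cite{GaLi15}; your geometric route could be made to work but would require you to essentially re-derive the K3 move, including the full contact form in the saddle model and the explicit transition of the front through the teleport.
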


\begin{proof}[Proof of Lemma~\ref{lem:L1}]

 Figure~\ref{fig:K3} below shows how the front projection changes as $\Lambda$ undergoes a Legendrian isotopy that culminates in passing $\Lambda$ across the relevant component of $L_1$. The first step creates new cusps via a Legendrian front Reidemeister I move and passes   one cusp across the given trace curve.  After a second Legendrian Reidemeister I move, a cusp   teleports across the same curve.  (Recall that this corresponds to $\Lambda$ intersecting the co-skeleton and skeleton, respectively, near the corresponding component of $L_1$.) Move S1 yields a front projection in the initial configuration for a K3 move, and the final step performs this K3 move.  This moves $\Lambda$ across $L_1$, changing the linking number, and one can easily verify that this also changes the count of up and down cusps by two. The desired change in linking is achieved by reflecting this sequence appropriately. 
\end{proof}
\begin{figure}[htbp]
	\centering
	\includegraphics[width=\textwidth]{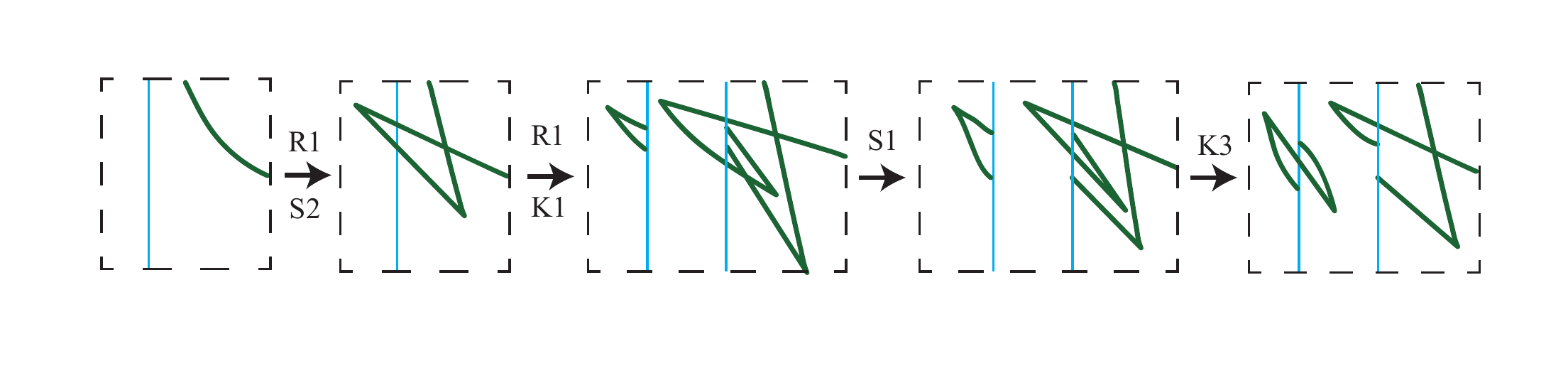}
	\caption{The sequence of Legendrian front Reidemeister moves labeling the arrows passes a segment of $\Lambda$ across the component of $L_1$ corresponding to the trace curves shown. }
	\label{fig:K3}
\end{figure}

\begin{lemma}\label{lem:L0} There is a  Legendrian isotopy of $\Lambda$ supported in $M\setminus (L_1\cup B)$  that reduces  $\text{lk}(L_0,\Lambda)$ by one and increases $D-U$ by $2$.
\end{lemma}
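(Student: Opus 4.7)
The strategy parallels Lemma~\ref{lem:L1}: exhibit a local Legendrian isotopy near a point of $L_0$ disjoint from $L_1\cup B$, and track how the front changes. I would fix $p\in L_0$ with $p\notin L_1\cup B$ and work in a contact neighborhood $U$ of $p$ still disjoint from $L_1\cup B$. Since $V_t$ has a nondegenerate index-$0$ zero at $p$ and is Liouville for the page symplectic form, a Darboux normal form lets us write $\alpha=dt+\tfrac12 r^2\,d\theta$ in page-polar coordinates $(r,\theta,t)$, with $V_t=\tfrac12 r\,\partial_r$ and $L_0=\{r=0\}$. Under the front projection, a point with $r>0$ maps to meridian angle $\theta$ at height $t$ on $\partial N(B)$.

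Next, I would construct an explicit Legendrian family $\{\Lambda_s\}_{s\in[-\epsilon,\epsilon]}$ agreeing with $\Lambda$ outside $U$, in which a short arc of $\Lambda$ sweeps across $r=0$ transversely at a single value $s_0$; the sign of the crossing can be chosen so that $\text{lk}(L_0,\Lambda)$ decreases by one, and the support condition is then automatic. The technical core is to identify how the front evolves through $s=s_0$. Because antipodal rays on the page flow to antipodal meridian values, the endpoints of the flow-out jump by a half-meridian as $\Lambda_s$ passes through $L_0$. Realising this jump by a continuous deformation of smooth front curves requires introducing a matched pair of cusps via Legendrian Reidemeister~I moves, possibly followed by teleports across trace curves of skeletal flowlines emanating from $p$ and a large S1 or K3-type move that sweeps one cusp across the meridian. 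A direct computation in the local model, using the orientation conventions of Section~\ref{sec:back} and the fact that the index-$0$ zero of $V$ has Poincar\'e--Hopf index $+1$, should show that both cusps are oriented downward, producing a net change of $+2$ in $D-U$.

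The main obstacle is this last step. In contrast to $L_1$, which is represented on the Morse diagram by paired trace curves and whose crossing can be packaged as a single K3 move, $L_0$ has no direct representation on the Morse diagram, so the crossing of $L_0$ cannot be captured by one local front move. Determining the correct sequence of Reidemeister moves, and verifying that the two new cusps share vertical orientation so they contribute $+2$ to $D - U$ rather than cancelling, is where I expect most of the work to lie.
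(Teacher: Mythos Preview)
Your proposal is far more elaborate than necessary, and its central premise is mistaken. You write that ``$L_0$ has no direct representation on the Morse diagram, so the crossing of $L_0$ cannot be captured by one local front move.'' In fact it can: among the Legendrian front Reidemeister moves listed in Figure~\ref{fig:reid} is the move K2, which is precisely the local front modification corresponding to passing $\Lambda$ across $L_0$. The paper's proof is a single sentence invoking K2, after which one reads off directly from the picture that the move changes $\text{lk}(L_0,\Lambda)$ by one and $D-U$ by two with the correct relative sign.

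Your Darboux-model approach is not wrong in spirit---if carried through, it would amount to rederiving what the K2 move looks like---but the specific mechanism you sketch (a pair of Reidemeister~I moves, teleports across skeletal trace curves, and an S1 or K3 sweep) does not match what actually happens. There are no teleports involved, since the flowlines from the index-$0$ point to the binding are not trace curves on the Morse diagram; and no K3 move appears, as K3 is reserved for crossing $L_1$. The ``half-meridian jump'' you anticipate is exactly what K2 encodes as a single allowed move, not as a composite. So the obstacle you identify in your last paragraph is illusory: the missing idea is simply to recognise that K2 already exists in the list of front Reidemeister moves and does the job.
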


\begin{proof}
This is a direct consequence of Move K2, which corresponds to passing $\Lambda$ across $L_0$.
\end{proof}

\begin{lemma}\label{lem:intB}  

Fix a component $B_i$ of the binding.  Then there exists a Legendrian isotopy supported in $M\setminus (\mathcal{L} \cup B\setminus B_i)$ taking $\Lambda$ to $\Lambda'$  such that $\Lambda'$ admits a Seifert surface $H'$ with $H' \bullet B_i=0$ and

 \[ \frac{1}{2}\left(D-U\right)+\lk(B \cup \mathcal{L},\Lambda)=\frac{1}{2}\left(D'-U'\right)+\lk(B \cup \mathcal{L},\Lambda').\]
\end{lemma}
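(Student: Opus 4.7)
I would mirror the proof of Lemma~\ref{lem:L1}, with the binding component $B_i$ replacing the index-$1$ critical link component. The strategy is to exhibit a local Legendrian isotopy, supported in $M\setminus(\mathcal{L}\cup B\setminus B_i)$, that passes a single strand of $\Lambda$ across $B_i$ while preserving the quantity $\tfrac{1}{2}(D-U)+\lk(B\cup\mathcal{L},\Lambda)$. Iterating this move reduces $|\lk(B_i,\Lambda)|$ to zero. Since $M$ is an integer homology sphere and $\Lambda'$ remains null-homologous, any Seifert surface $H'$ for $\Lambda'$ will then automatically satisfy $H'\bullet B_i=\lk(B_i,\Lambda')=0$, possibly after tubing off geometric intersections using the algebraic-zero condition. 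The outer structure is therefore an induction on $|\lk(B_i,\Lambda)|$, with the base case trivial and the inductive step supplied by the local crossing move.

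To produce the single-step isotopy, I would work in the standard binding neighbourhood of Equation~(\ref{stdtori}), where the contact structure is $\ker(dz+x\,dy)$ on $(0,\infty)\times S^1\times S^1$. Near a strand of $\Lambda$ sufficiently close to $B_i$, first apply an S1 move to introduce a small zig-zag with one up cusp and one down cusp, then use the local geometry to pull a cusped loop across $B_i$ -- the binding-crossing analogue of K2 and K3 already used in Lemmas~\ref{lem:L0} and~\ref{lem:L1}. I expect this local model to show that the crossing changes $\lk(B_i,\Lambda)$ by $\pm 1$ and simultaneously alters the signed cusp count $D-U$ by $\mp 2$, so that $\tfrac{1}{2}(D-U)$ and $\lk(B,\Lambda)$ each shift by the same magnitude with opposite signs. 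Because the isotopy is confined to a neighbourhood of $B_i$, linkings with $\mathcal{L}$ and with $B\setminus B_i$ are unaffected.

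The main obstacle is pinning down the binding-crossing move precisely and checking the combinatorial bookkeeping: each crossing of $B_i$ must be accompanied by the correct $\mp 2$ change in $D-U$, with no extraneous interactions with teleporting points or trace curves. I would handle this by producing a front-diagrammatic picture analogous to Figure~\ref{fig:K3}, combining S1, S2, and K-type moves (as listed in \cite{GaLi15, GaLi19}) and verifying the claim directly in the coordinates of~(\ref{stdtori}). In these coordinates, the binding sits at $x=0$; a Legendrian arc approaching $B_i$ must have $\partial z/\partial y=-x\to 0$, so the two cusps created by the S1 move can be guided to the edge of the Morse diagram and pushed across $B_i$ by a controlled local perturbation. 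Once this local picture is verified, iteration plus the homology-sphere argument yields $\Lambda'$ and $H'$ with the stated properties.
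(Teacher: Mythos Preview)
Your overall strategy---produce a local Legendrian isotopy that carries one strand of $\Lambda$ across $B_i$, check that the linking change is exactly offset by the change in the signed cusp count, iterate, then tube off remaining geometric intersections of the Seifert surface---is precisely what the paper does.

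Where you work too hard is in manufacturing the crossing move. The list of front Reidemeister moves in \cite{GaLi15,GaLi19} (reproduced in Figure~\ref{fig:reid}) already contains a move, B1, that passes a strand across a binding component; the paper's proof simply invokes B1, observes directly from its picture that it changes $\lk(B,\Lambda)$ by one and $D-U$ by two with opposite signs, and is done. Your proposed assembly from S1, S2 and K-type moves cannot succeed as stated: the S-moves are ordinary planar Reidemeister moves inside a single torus chart and the K-moves cross components of $\mathcal{L}$, so no composition of them ever takes $\Lambda$ across the binding. Your alternative---building the move by hand in the coordinates of~(\ref{stdtori})---would work, but amounts to rederiving B1, and the preliminary S1 zig-zag you propose is unnecessary since B1 itself supplies the required pair of like-oriented cusps. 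Once you cite B1, the ``main obstacle'' you identify disappears and the proof reduces to the bookkeeping and tubing you already describe.
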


\begin{proof}

The existence of Move B1  implies that $\Lambda$ may be isotoped across any component of the binding.  This changes the linking number $\text{lk}(\Lambda, B)$ by one and the signed cusp sum by two.  Checking that these changes have opposite signs, we see that this move preserves the quantity $1/2(D-U)+\lk(B,\cup\mathcal{L},\Lambda)$.   This move is local: it may be applied to an arbitrarily small segment of $\mathcal{F}(\Lambda)$, and thus the associated isotopy is supported in the complement of $(B\setminus B_i)\cup \mathcal{L}$.

Given an orientable surface $H$ with $\partial H=\Lambda$, the isotopy may be extended to an isotopy of $H$ that removes or adds intersections between $H$ and $B_i$.  Any pair of intersections between $H$ and $B_i$ of opposite sign may be removed by adding a tube to $H$,  and operation which preserves the second homology class of the surface, so one may easily construct the desired $H'$ disjoint from $B_i$.
\end{proof}

\subsection{Arbitrary $3$-manifolds}\label{sec:aux}

As noted in the introduction, when the ambient manifold is not a homology $3$-sphere, the rotation number of $\Lambda$ is only defined relative to a fixed class in $H_2(M, \Lambda)$.  We next describe the construction of a link $X\subset M\setminus \Sigma_0$ which satisfies the property that $\Lambda\cup X$ is null-homologous in $\Sigma\times I$, as required for Theorem~\ref{prop:front}.

Suppose $\Lambda$ is a null-homologous Legendrian knot in $M$ and disjoint from the page $\Sigma_0$ of the open book.  Lemma~\ref{lem:Xdet} claims that the intersection $X$ between $\Sigma_0$ and any Seifert surface for $\Lambda$ suffices to determine the class of the Seifert surface in $H_2(M, \Lambda)$.  More precisely, for a fixed Seifert surface $H$ for $\Lambda$, let $X$ denote the link $H\cap \partial N(\Sigma_0)$.  By construction, $X\cup \Lambda$ is null-homologous in $M\setminus \Sigma_0\equiv \Sigma \times I$.  We now show that the homology class of $X \cup \Lambda$ recovers $[H]$.

\begin{proof}[Proof of Lemma~\ref{lem:Xdet}] Consider the long exact sequence of the pair $(\Sigma\times I,\Lambda \cup X)$:
\begin{equation*}
 \dots\rightarrow H_2(\Sigma\times I)\rightarrow H_2(\Sigma\times I,\Lambda \cup X)\stackrel{\delta}{\rightarrow} H_1(\Lambda \cup X)\rightarrow \dots  
 \end{equation*}

First observe that $H_2(\Sigma\times I)=0$, as $\Sigma$ has boundary, so the connecting map $\delta$ is injective.  There is at most one homology class of Seifert surfaces for $\Lambda \cup X$, but since $\Lambda \cup X$ is null-homologous, there exists at least one class.

Thus we get a unique homology class of Seifert surfaces for $\Lambda\cup X$ in $\Sigma\times I$.
\end{proof}

Given $\mathcal{F}(\Lambda)$, we note that it is possible to draw the front projection of an appropriate link $X$ on the Morse diagram without having to construct a Seifert surface and cut.  See Section 7.1 of \cite{GaLi15}.

With Lemma~\ref{lem:Xdet} in hand, the proof of Theorem~\ref{prop:front} follows from replacing linking numbers between curves with intersection numbers between curves and surfaces.

\subsection{The Euler class}\label{sec:eucomp}
In this section we prove Theorem~\ref{thm:eu}, which identifies the class of $\mathcal{L}$ as the Poincar\'e dual to the Euler class  of $\xi$.  

\begin{proof}[Proof of Theorem~\ref{thm:eu}]

Fix a class  in  $H_2(M)$ and represent it by a closed embedded surface $S$. After possibly isotoping $S$, it may be cut along  a Legendrian knot $\Lambda$ to get two Seifert surfaces $H$ and $H'$ of $\Lambda$ such that $S=H-H'$. According to Proposition~3.5.15 in~\cite{Ge08}, \[\langle \textrm{e}(\xi),H-H'\rangle=\rot_H(\Lambda)-\rot_{H'}(\Lambda).\]
We apply Theorem~\ref{prop:front} to compute the terms on the right, which yields the following:
			\begin{equation*}
			\langle \textrm{e}(\xi),S\rangle=\langle \textrm{e}(\xi),H-H'\rangle
		    =\rot_H(\Lambda)-\rot_{H'}(\Lambda)
			=\mathcal{L}\bullet (H-H')=\mathcal{L}\bullet S.
			\end{equation*}
			The claim follows.
				\end{proof}
		
			Together with Corollary 3.5.16 from~\cite{Ge08}, we see that $\mathcal{L}$ is null-homologous if and only if $\textrm{e}(\xi)=0$ if and only if $\rot(\Lambda)$ is independent of the choice of Seifert surface for all null-homologous Legendrian knots $\Lambda$ in $(M,\xi)$. We note  that although we defined $\mathcal{L}$ in terms of  a Morse structure, the homology class of this knot is rather simpler  to define.

\subsubsection{Computing the Euler class}
To conclude this section, we describe how to compute the homology class of $\mathcal{L}$ from the Morse diagram.  Since $\mathcal{L}$ is contained in the skeleton of $(B, \pi, F, V)$, it does not admit a front projection simply via the flow of  $\pm V$.  However, applying a carefully chosen isotopy class remedies this.  

\begin{lemma}\label{lem:l1comp}
For any component $L$ of $L_1$,  consider the vertical line connecting the top and bottom endpoints of a trace curve associated to the same index $1$ critical point.  Translate this horizontally by $\epsilon$ so that it is disjoint from the trace curve near $t=0$.  Then $[L]\in H_1(M)$ is the signed sum of intersections between this translated vertical line and the trace curves labeled with homology classes as in Lemma~\ref{lem:homcomp}.
\end{lemma}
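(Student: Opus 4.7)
The plan is to realize $[L]\in H_1(M)$ as $[\tilde L]$ for a smooth push-off $\tilde L$ of $L$ lying in $M\setminus\Sigma_0$ whose front projection on the Morse diagram is precisely the translated vertical line in the statement. The computation then reduces to a direct application of Lemma~\ref{lem:homcomp}(4).

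To build $\tilde L$, observe that $L$ is the closed curve swept out in $M$ by the index-1 critical point $p_t\in\Sigma_t$ of $f_t$ as $t$ varies. In each page I would displace $p_t$ by a fixed small amount $\delta$ along one branch of a transversal within $\Sigma_t$ to the union of the stable and unstable manifolds of $p_t$; the resulting curve is isotopic to $L$ in $M\setminus\mathrm{Skel}$, and after a further small isotopy localized near $t=0$ it becomes a curve $\tilde L\subset M\setminus(\Sigma_0\cup B\cup\mathrm{Skel})$ to which the front projection construction applies. Under the flow by $\pm V$ to $\partial N(B)$, any point lying just off the skeleton near $p_t$ is carried along one of the two unstable trajectories emanating from $p_t$ until it meets $\partial N(B)$ near one of the endpoints of the trace curve associated to $p_t$; the choice of transverse direction for the push-off selects which endpoint, and thus which of the two paired trace curves appears. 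As $t$ varies, these endpoints vary continuously, so the image of $\tilde L$ under $\pm V$ is an arc on the Morse diagram parallel to and slightly displaced from the chosen trace curve. Choosing $\delta$ suitably small, and with the appropriate sign, identifies this arc with the $\epsilon$-translated vertical line described in the statement.

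The conclusion is then immediate from Lemma~\ref{lem:homcomp}(4), which identifies $[\tilde L]\in H_1(M)$ with the signed sum of the trace-curve labels at crossings of $\mathcal{F}(\tilde L)$. The principal technical obstacle will be controlling the front projection across the finitely many pages where handle slides occur, since there the stable and unstable manifolds of $p_t$ rearrange and the trace curves on the Morse diagram have teleporting endpoints; this is a local verification in a neighborhood of each such page, and invariance of the labeled-intersection count under the Legendrian Reidemeister moves of Figure~\ref{fig:reid} ensures that the sum is unaffected by how one resolves the push-off in such neighborhoods. A similar invariance argument handles the short arc near $t=0$ where $\tilde L$ is isotoped off $\Sigma_0$.
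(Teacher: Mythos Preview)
Your overall strategy---push $L$ off to a nearby curve admitting a front projection and then invoke Lemma~\ref{lem:homcomp}(4)---is the same as the paper's, but the identification of the resulting front with the translated vertical line is where the argument breaks down.

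First, the front of your push-off is not the vertical line.  A curve displaced from $L$ into one of the quadrants at each saddle flows under $V$ to a curve on $\partial N(B)$ that runs \emph{parallel to the trace curve} for $c$, as you correctly note.  But the trace curve is not vertical in general: it wanders with the monodromy and may even teleport.  So the sentence ``Choosing $\delta$ suitably small \ldots identifies this arc with the $\epsilon$-translated vertical line'' is simply false except in the trivial-monodromy case.

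Second, the ``further small isotopy localized near $t=0$'' cannot exist as described.  The push-off links $B$ once (just as $L$ does), so it meets $\Sigma_0$ algebraically once; you cannot isotope it off $\Sigma_0$ without crossing the binding.  This crossing is exactly Move~B1, and it is not local to your curve because your push-off sits near $L$, far from $B$.  The paper avoids this by pushing $L$ off \emph{along a co-core flowline toward $B$}, so that the push-off lies on the co-core (hence its front is literally the trace curve) and is already close enough to $B$ for B1 to apply.

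The point you are missing is what B1 actually produces.  After B1, the front consists of the trace curve with a small arc near $t=0$ excised, together with a new nearly-vertical segment joining the two loose ends.  The trace-curve portion contributes nothing to the labeled intersection sum because trace curves meet one another only non-transversely at teleporting points; all the transverse intersections with labeled trace curves occur along the added vertical segment.  That segment, translated slightly so it misses the original trace curve near $t=0$, \emph{is} the vertical line in the statement.  Without isolating this step---B1 introduces the vertical segment, and the trace-curve portion is then discarded because it contributes zero---there is no way to get from ``front parallel to the trace curve'' to ``translated vertical line.''
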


\begin{proof}
 Recall that we may assume the index $1$ critical point $c$ associated to $L$ is fixed for all $t$.  
Push $L$ slightly off itself along one of the two flowlines to $B$. Since this push-off lies on the co-core  through $c$,  the front projection of $L$ is simply one of the trace curves associated to $c$.  In order to compute the homology class of the push-off, apply Move B1 to eliminate the intersection with the page $\Sigma_0$.  The result is isotopic to a copy of a single trace curve with a small neighborhood removed near $t=0$ and with the resulting endpoints connected by a vertical curve. The usual technique for computing homology may then be applied, and the only intersections will be with the vertical segment resulting from the B1 move.  For the purposes of computation, it thus suffices to consider this segment alone, as described in the statement of the lemma.
\end{proof}

Recall from Section~\ref{sec:back} that $\gamma_i$ is a flowline from $c_0$ to a marked point $p_i$ on $B$, and that the left edge of the $i^{th}$ component of the Morse diagram  is labeled by the relative homology class $[\gamma_i]=P_i\in H_1(\Sigma, \{p_i\}\cup c_0)$.  This vertical curve acts formally like a trace curve, in the sense that its label changes each time a trace curve ``teleports" from the left to the right edge or vice versa. 

\begin{lemma} For any choice of $i$, $[\gamma_i-\phi_*(\gamma_i)]=[L_0]$.  This class is computed by subtracting the bottom label of the left edge of the $i^{th}$ component of the Morse diagram from the top label.
\end{lemma}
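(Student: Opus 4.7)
The second assertion of the lemma is immediate from part (2) of Lemma~\ref{lem:homcomp}, so the substance of the proof is the identity $[\gamma_i - \phi_*(\gamma_i)] = [L_0]$ in $H_1(M)$. The plan is to exhibit an explicit $2$-chain in $M$ whose boundary realizes this equality, after which the numerical recipe follows simply by reading off $[\gamma_i - \phi_*(\gamma_i)]$ from the Morse diagram as prescribed by Lemma~\ref{lem:homcomp}(2).

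For each $t \in [0,1]$, let $\gamma_i^t \subset \Sigma_t$ denote the flowline of $V_t$ from the unique index zero critical point $c_0(t)$ of $f_t$ to $p_i \in B_i$, chosen to depend continuously on $t$ and normalized so that $\gamma_i^0 = \gamma_i$. Since each page has a single index zero critical point and handle slides in a Morse structure occur only between critical points of equal index, such a continuous family exists, and its union $\Gamma_i := \bigcup_{t \in [0,1]} \gamma_i^t$ is a compact $2$-chain in $M$. The boundary of $\Gamma_i$ decomposes into four arcs corresponding to the sides of the parametrizing rectangle. The $t=0$ arc is $\gamma_i$; the $t=1$ arc, after applying the mapping-torus identification $(x,1) \sim (\phi(x), 0)$ on $M \setminus B$, is identified with $\phi_*(\gamma_i)$ in $\Sigma_0$ and enters the boundary with reversed orientation; the arc through $c_0(t)$ is the trace of the index zero critical points, which is $L_0$ up to orientation; and the arc at $p_i$ collapses to a single point in $M$ because $\phi$ is the identity on $B$. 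With $L_0$ oriented by increasing $t$ as specified before Theorem~\ref{thm:front} and with the standard induced boundary orientation on $\Gamma_i$, one arrives at $\partial \Gamma_i = \gamma_i - \phi_*(\gamma_i) - L_0$, and passing to homology gives the claim.

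The main technical obstacle is the orientation bookkeeping at the $c_0(t)$-edge of $\Gamma_i$: one must verify that the sign with which $L_0$ appears in $\partial \Gamma_i$ is consistent with the convention that $L_0$ is oriented by increasing $t$. A secondary concern is that the family $\gamma_i^t$ may not sweep out an embedded surface if segments of $\gamma_i^t$ cross other skeletal flowlines as $t$ varies, but any such self-intersections contribute canceling terms to $\partial \Gamma_i$ and do not affect the resulting homology class.
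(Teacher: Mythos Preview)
Your argument is correct and essentially the same as the paper's: the paper isotopes $L_0$ via a finger move along $\gamma_i$ and across $B$, then sweeps the remaining vertical segment along the surface $\gamma_i\times I$ to arrive at the loop $\gamma_i-\phi_*(\gamma_i)$, whereas you compute the boundary of that same swept surface $\Gamma_i$ directly as a $2$-chain. The two descriptions are dual---the trace of the paper's isotopy is precisely your $\Gamma_i$---so the only difference is that the paper packages the orientation bookkeeping into an isotopy while you leave it as an explicit $\partial$-computation.
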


The fact that $[L_0]$ may be computed for any choice of $i$ is unsurprising, as Lemma~\ref{lem:homcomp} implies that each difference $[\gamma_i-\phi_*(\gamma_i)]- [\gamma_j-\phi_*(\gamma_j)]$ is trivial in $H_1(M)$.

\begin{proof}  
 As in the case of $L_1$, we may apply a finger move along $\gamma_i$ and then across $B$ so that a knot isotopic to $L_0$ is contained in $M\setminus \Sigma_0$.  This knot consists of four distinct segments: the unchanged vertical segment in the skeleton, which is oriented with increasing $t$;  a vertical segment near $p_i\times I$ oriented downward; a copy of $\gamma_i$ at $t=1-\epsilon$; and a copy of $\phi_*(-\gamma_i)$ at $t=\epsilon$. Finally, isotope the original vertical segment along $\gamma_i\times I$, leaving a loop $\gamma_i-\phi_*(\gamma_i)$ near $t=\epsilon$. Applying Lemma~\ref{lem:homcomp}, the result follows.
\end{proof} 

\begin{example}[Computing the Euler class]\label{ex:exann}
Here we briefly illustrate the computational techniques described in the lemmas above.  The left image in Figure~\ref{fig:exann} shows the mapping cylinder for an overtwisted contact structure on $S^3$.  (Since $S^3$ is a homology sphere, the outcome is foregone, but the pictures are clearer than in the case of more complicated manifolds.)

\begin{figure}[h!]
	\centering
	\includegraphics[width=\textwidth]{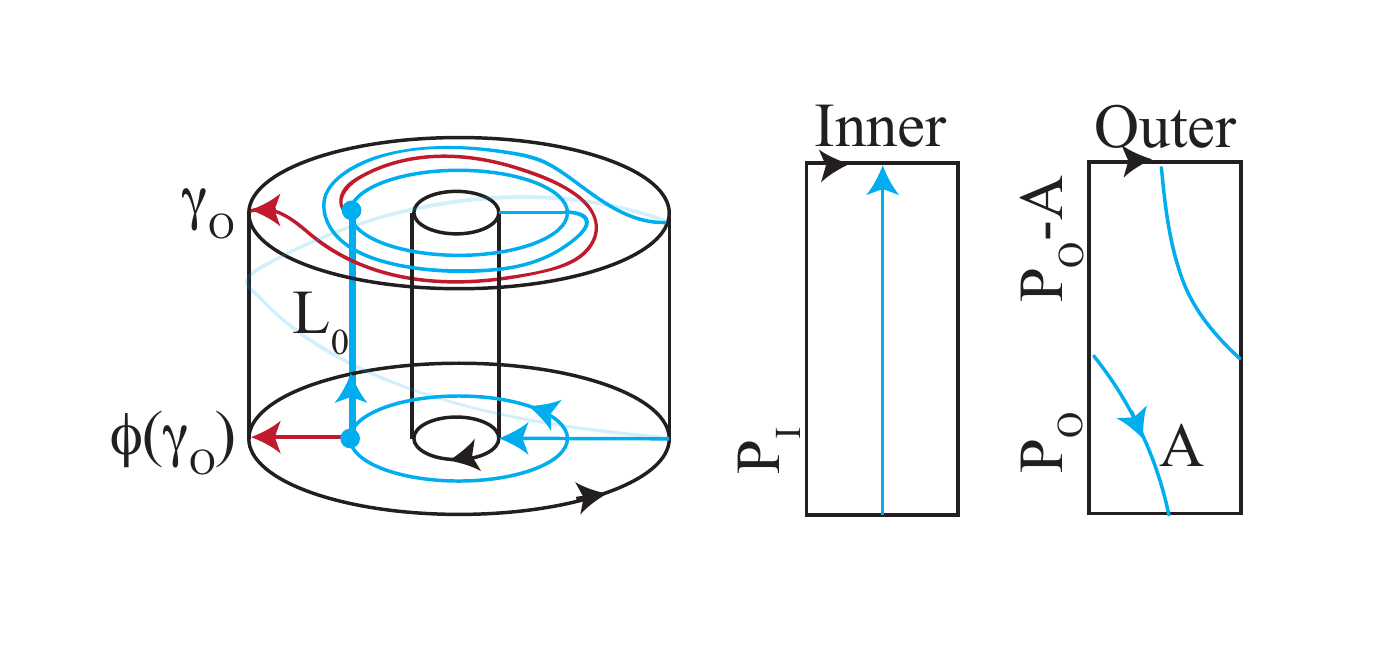}
	\caption{ The monodromy of the annular open book is a left-handed Dehn twist. }
	\label{fig:exann}
\end{figure}

Performing a finger move on $L_0$ across $B$ along $\gamma_0$ and then isotoping the resulting loop inside the mapping cylinder shows that $[L_0]=[\gamma_0-\phi(\gamma_0)]=-A$.  
\end{example}

When the monodromy of the open book is sufficiently simple, there are a few shortcuts for computing $[\mathcal{L}]$.  We say that a flowline  connecting $c_0$ to an index $1$ critical point $c$  is \textit{preserved} if it persists for all $t$.  This condition is equivalent to requiring no handleslides across the corresponding side of the co-core curve associated to $c$, and a preserved flowline traces out an annulus bounded by $L_0$ and the corresponding component of $L_1$.

\begin{lemma}\label{lem:cancel}
Suppose that at least one of the flowlines associated to a connected component $L\subset L_1$ is preserved. Then $[\mathcal{L}]=[\mathcal{L}\setminus (L_0\cup -L)]\in H_1(M)$. 
\end{lemma}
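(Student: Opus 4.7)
The plan is to exploit directly the paragraph immediately preceding the lemma: a preserved flowline from $c_0$ to the index~$1$ critical point $c$ underlying $L$ sweeps out an embedded annulus $A \subset M$ as $t$ ranges over $S^1$. The boundary of this annulus is precisely $L_0 \cup L$ (as unoriented $1$-manifolds), and the only real content of the lemma is to read off the orientations correctly and then conclude via a short homological computation.

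First I would fix notation: let $\gamma_t \subset \Sigma_t$ denote the preserved flowline of $V_t$ from the unique index~$0$ critical point to the index~$1$ critical point associated to $L$, and set $A := \bigcup_{t \in S^1}\gamma_t$. Because the flowline persists for all $t$ (no handle slide obstructs it), $A$ is an embedded annulus in $M$. Next I would orient $A$ by the product orientation coming from the flow parameter on $\gamma_t$ and the $t$-direction on $S^1$, and compute $\partial A$. Since $L_0$ and $L_1$ are both oriented by increasing $t$, and since $A$ meets $L_0$ along the ``starting endpoint'' boundary circle and $L$ along the ``ending endpoint'' boundary circle, the induced boundary orientation gives $\partial A = L - L_0$ (up to sign). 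In particular $[L_0] = [L] \in H_1(M)$.

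Now the statement becomes a straightforward bookkeeping step. Decompose
\[
\mathcal{L} \;=\; L_0 \,\cup\, (-L) \,\cup\, \bigl(-(L_1 \setminus L)\bigr),
\]
and pass to $H_1(M)$:
\[
[\mathcal{L}] \;=\; [L_0] - [L] \,+\, [-(L_1 \setminus L)] \;=\; 0 \,+\, [\mathcal{L} \setminus (L_0 \cup -L)],
\]
where the last equality uses the annulus identity $[L_0] = [L]$. This is exactly the content of the lemma.

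The only point requiring genuine care is verifying the orientation convention: I need to check that with $L_0$ and $L_1$ each oriented by the $t$-direction (so that $-L$ appears in $\mathcal{L}$), the annulus $A$ really does provide a $2$-chain with $\partial A = L - L_0$ rather than $L + L_0$ or some other combination. This is essentially a local check at an index~$0$ and an index~$1$ critical point, comparing the coorientation of $A \cap \Sigma_t = \gamma_t$ inside the page with the conventions set up in Section~\ref{sec:back}. I expect this to be the main (mild) obstacle; once the sign is pinned down, the rest of the argument is immediate, and there is nothing further to verify about $L_1 \setminus L$, which simply rides along as a bystander in the computation.
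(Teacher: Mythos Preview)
Your proposal is correct and follows essentially the same approach as the paper: the preserved flowline sweeps out an annulus cobounded by $L_0$ and $L$, forcing $[L_0]=[L]$ in $H_1(M)$, so deleting both from $\mathcal{L}$ leaves the homology class unchanged. The paper's proof is a terse three-sentence version of exactly this argument, omitting the orientation check you flag as the main care point.
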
 

Here, we treat the empty link as representing the $0$ class.

\begin{proof} A flowline is preserved if it is fixed by the monodromy map.  It thus defines an annulus in $M$ spanned by $L_0\cup-L$; since these cancel in homology, the class of $\mathcal{L}$ is unchanged by deleting both of them.  
\end{proof}

Note that preserved flowlines may be detected easily from the Morse diagram.
Any handle slide across the co-core appears on the Morse diagram as a point where one trace curve teleports across the trace curves associated to $c$.  If there are no teleporting points on at least one side of a fixed trace curve, then at least one of the associated flowlines is preserved. This may be seen in Example~\ref{ex:exann}.

This approach to computing the Euler class offers a simple proof of some familiar facts:

\begin{corollary}
Let $(B, \pi)$ be an open book for the contact manifold $(M,\xi)$. If the monodromy $\phi$ is the identity or $\Sigma$ is an annulus, then $e(\xi)=0$.
\end{corollary}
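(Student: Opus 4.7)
The plan is to invoke Theorem~\ref{thm:eu} and verify in both cases that $[\mathcal{L}]=0\in H_1(M)$. The crucial observation, extracted from the proof of Lemma~\ref{lem:cancel}, is that a preserved flowline from $c_0$ to an index~$1$ critical point $c$ produces an embedded annulus in $M$ with boundary $L_0\cup (-L_c)$, so $[L_c]=[L_0]$ in $H_1(M)$. This will do most of the work in both cases.

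For the identity monodromy case, I would first choose the Morse structure so that $(f_t,V_t)$ is independent of $t$. Every flowline of $V_0$ is then trivially preserved, so $[L_c]=[L_0]$ for each component $L_c$ of $L_1$. Next, I would invoke the lemma immediately preceding Example~\ref{ex:exann}, which asserts $[L_0]=[\gamma_i-\phi_*(\gamma_i)]$ in $H_1(M)$; since $\phi_*=\mathrm{id}$, this class vanishes, and therefore $[\mathcal{L}]=[L_0]-\sum_c[L_c]=0$.

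For the annular case, $\chi(\Sigma)=0$ forces exactly one index~$0$ and one index~$1$ critical point on each page, so $\mathcal{L}=L_0\cup (-L_1)$ has precisely two components. The mapping class group of the annulus rel boundary is generated by the Dehn twist about the core, which admits a representative fixing the core pointwise. I would accordingly choose a Morse structure in which $c_0$ and $c_1$ both lie on the core, so that the flowlines of $V_t$ between them run along the two arcs of the core. At least one such flowline is then preserved, yielding $[L_1]=[L_0]$ and hence $[\mathcal{L}]=0$.

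The main obstacle I anticipate is verifying that the Morse structures proposed above---constant in $t$ in the first case, and with critical points placed on the core of the annulus in the second---genuinely satisfy all the technical conditions required by the definition of a compatible Morse structure from \cite{GaLi15}. I expect the flexibility built into that construction to suffice, but a careful check may be needed. Once $[\mathcal{L}]=0$ is established in each case, Theorem~\ref{thm:eu} immediately yields $e(\xi)=0$.
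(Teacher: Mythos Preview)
Your proposal is correct, and the overall strategy---show $[\mathcal{L}]=0$ and invoke Theorem~\ref{thm:eu}---matches the paper's. The details differ in both cases, and the paper's versions happen to sidestep the obstacle you flag.

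For the identity monodromy, the paper observes directly that with a Morse structure in which $V$ changes only by isotopy, each component of $\mathcal{L}$ is isotopic to a meridian of the binding and hence individually null-homologous. This is a purely geometric shortcut compared to your route through the lemma computing $[L_0]=[\gamma_i-\phi_*(\gamma_i)]$; both are valid, and yours has the virtue of exercising the computational machinery of the section.

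For the annulus, the paper's argument is cleaner and dissolves your concern entirely: with a single index~$1$ critical point there is nothing to slide over, so \emph{no handle slides can occur regardless of the monodromy or placement of critical points}. By the stated characterization of preserved flowlines (no handle slides across the corresponding side of the co-core), every flowline is automatically preserved, giving $[L_0]=[L_1]$ without any need to put the critical points on the core. Your approach also works, but the special placement is unnecessary and is precisely what forces the compatibility check you worry about; the paper's route avoids it.
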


\begin{proof}
If $\phi$ is the identity, we may choose a Morse structure such that $V$ changes only by isotopy. It follows that each component of $\mathcal{L}$ is  isotopic to a meridian of the binding and is therefore nullhomologous.

 In the annular case, we may assume a Morse structure with a single index zero  and a single index one critical point on each page. Since there can be no handle slides, each flowline is preserved and $[L_0]=[L_1]$.
\end{proof}


\begin{example}\label{ex:l0}

In Figure~\ref{fig:euex}, the Morse diagram introduced in Figure~\ref{fig:firstex} is decorated with the labels required to compute the Euler class of the supported contact structure. $H_1(\Sigma)$ is generated by two elements $A$ and $B$, and we compute \[H_1(M)\cong \langle A \rangle \oplus \langle B \rangle  \slash \langle A \rangle \cong \langle B \rangle\cong \mathbb{Z}.\]

\begin{figure}[h!]
	\centering
	\includegraphics[width=\textwidth]{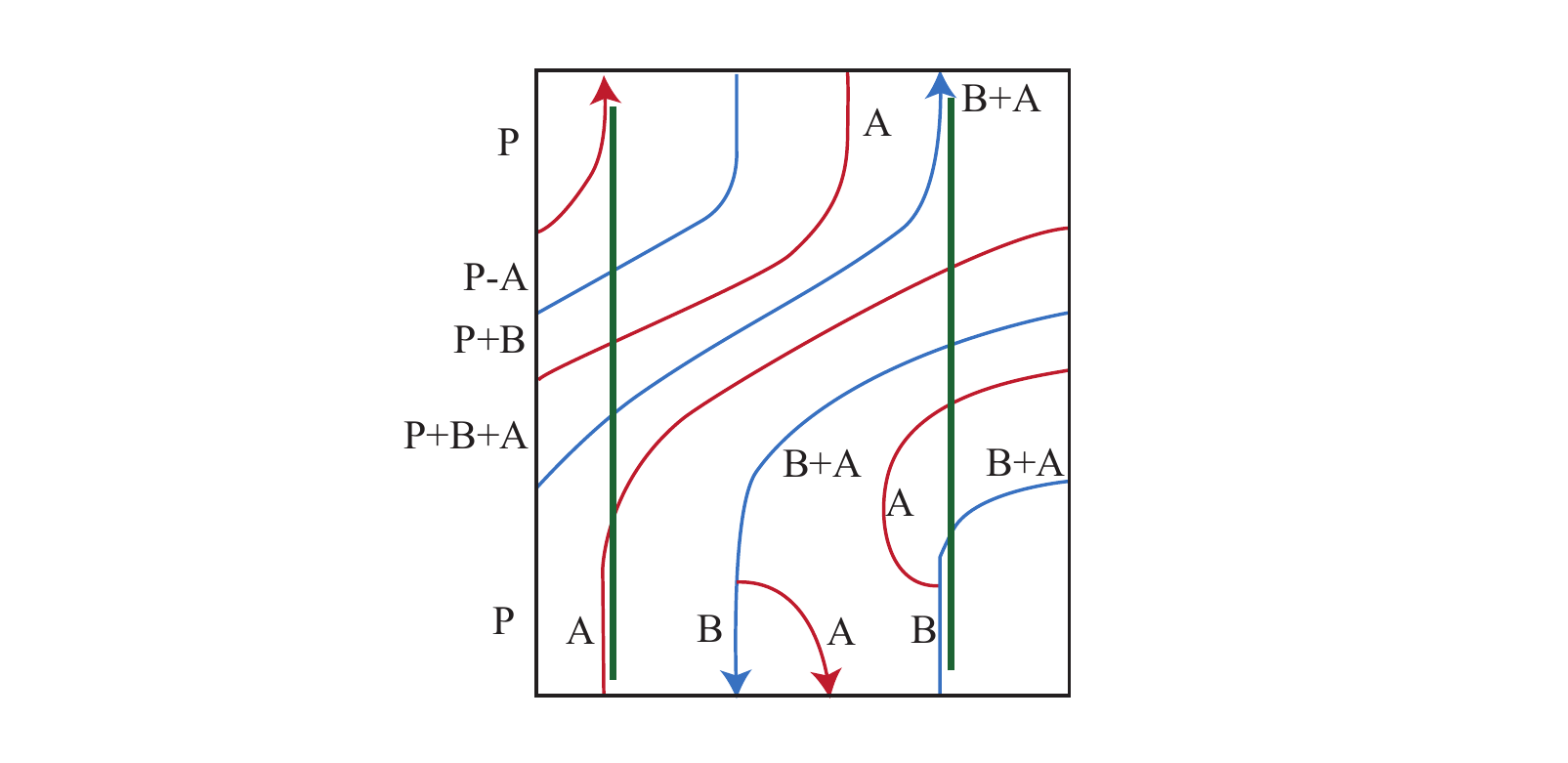}
	\caption{$e(\xi)=0$. }
	\label{fig:euex}
\end{figure}

The bold green curves are the translated vertical segments used to compute $[L_1]$.  Summing intersections with the left and right segments each yields  $0$. Subtracting the top and bottom labels on the left edge, we compute $[L_0]=0$.  Summing these with signs yields the result that $e(\xi)=0$.  Note that the computations for $\mathcal{L}$, $-L_1^1$, and $-L_1^2$ all agree, as required by Lemma~\ref{lem:cancel}.
\end{example}

\section{Linking and  intersection numbers}\label{sec:linking}

The previous section provides formulas for the rotation of $\Lambda$ in terms of intersection numbers between $B \cup \mathcal{L}$ and a Seifert surface $H$ (or $H_X$) for $\Lambda$.  This section presents algorithms to compute such intersections numbers, making Theorems~\ref{thm:front} and \ref{prop:front} effective computational tools. 

We begin with a few special cases where the computation of linking numbers is straightforward. 

\begin{proposition} Suppose that $\Lambda$ is a null-homologous Legendrian knot in $M$.  The  linking number between $\Lambda$ and $B$ is  computed by the signed intersection number between the front projection of $\Lambda$ and the horizontal (multi)curve $\{t=0\}$ on the Morse diagram:
\[ \text{lk}(\Lambda,  B)=\mathcal{F}(\Lambda)\bullet \{t=0\}.\]
\end{proposition}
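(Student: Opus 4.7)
The key observation is that the page $\Sigma_0$ is itself a Seifert surface for the binding $B$, since by the defining condition of an open book $\partial \Sigma_0 = B$ with compatible orientations.  Consequently, the linking number may be computed directly as an algebraic intersection number in $M$, namely $\text{lk}(\Lambda, B) = \Lambda \bullet \Sigma_0$.

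The strategy is then to transfer this count from $\Sigma_0 \subset M$ to the Morse diagram $\partial N(B) \subset M$.  By a small Legendrian isotopy we may assume that $\Lambda$ meets $\Sigma_0$ transversely at a finite collection of points, each lying away from $B \cup \text{Skel}$ and thus in the interior of the open solid-torus region described in \eqref{stdtori}, where the flow by $\pm V$ to $\partial N(B)$ is well defined.  Since $V$ is tangent to every page, the $t$-coordinate is preserved under this flow, so each intersection point of $\Lambda \cap \Sigma_0$ maps bijectively to an intersection point of $\mathcal{F}(\Lambda) \cap \{t = 0\}$; moreover, teleportations occur exactly when $\Lambda$ crosses the skeleton, so they can be arranged (by a further small perturbation) to happen at $t \neq 0$, and therefore do not disturb the count.

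It then remains to check that the signs match.  With the natural orientations --- $\Sigma_0$ cooriented by the direction of increasing $t$ and $\{t=0\}$ oriented as $\Sigma_0 \cap \partial N(B)$ --- a positive crossing of $\Lambda$ with $\Sigma_0$ corresponds to $\Lambda$ passing through $\Sigma_0$ in the direction of increasing $t$, and because the flow by $\pm V$ keeps the $t$-coordinate unchanged, the sign of $\dot t$ at the intersection is identical for the flowed curve.  Summing over intersection points identifies $\Lambda \bullet \Sigma_0$ with $\mathcal{F}(\Lambda) \bullet \{t=0\}$, completing the proof.  The main obstacle here is purely bookkeeping: one must check the orientation conventions and verify that the generic position arguments used near the skeleton can indeed be arranged simultaneously with transversality along $\{t=0\}$; none of this is essentially difficult, but it requires care.
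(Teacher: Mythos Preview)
Your proof is correct and follows exactly the paper's approach: the page $\Sigma_0$ is a Seifert surface for $B$, so intersection points between a generic $\Lambda$ and $\Sigma_0$ are in bijection with intersection points between their images on the Morse diagram. The paper states this in a single sentence, whereas you have spelled out the genericity and sign-checking details more carefully, but the argument is the same.
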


\begin{proof} The page $\Sigma_0$ is a Seifert surface for $B$, so  intersection points between a generic Legendrian $\Lambda$ and $\Sigma_0$ are in bijection with intersection points between the images of these objects in the front projection.
\end{proof}

In order to compute intersection numbers more generally, we recall the generalization of Seifert's Algorithm introduced in \cite{GaLi15}. After perhaps introducing some auxiliary link $X$ as in Section~\ref{sec:aux}, we may assume that $\Lambda$ is null-homologous in $\Sigma \times I$.  Cutting $M$ along the skeleton of the Morse structure yields a collection of Legendrian arcs properly embedded in a set of solid tori.   Next, connect endpoints of these arcs with curve segments embedded in the skeleton. We call the resulting link $\widetilde{\Lambda}$.  The connecting segments are chosen to glue in pairs when the solid tori are reassembled into $M$; a Seifert surface $\widetilde{H}$ for $\widetilde{\Lambda}$ therefore induces a Seifert surface $H$ for $\Lambda$, and the intersection numbers of any curve with the two surfaces will agree.  

We now consider how this surgery affects the front projection.  Intersections between $\Lambda$ and the skeleton project to teleporting points on the Morse diagram.  Thus the operation just described connects pairs of teleporting endpoints of the front projection with segments  parallel to the trace curves.  

To make this more precise, observe that the teleporting endpoints of $\mathcal{F}(\Lambda)$ and the trivalent points of the trace curves separate each trace curve into intervals. Assign each interval $T_i$ a multiplicity $m_i(T_i)$ as follows.  

\begin{figure}[htbp]
	\centering
	\includegraphics[width=\textwidth]{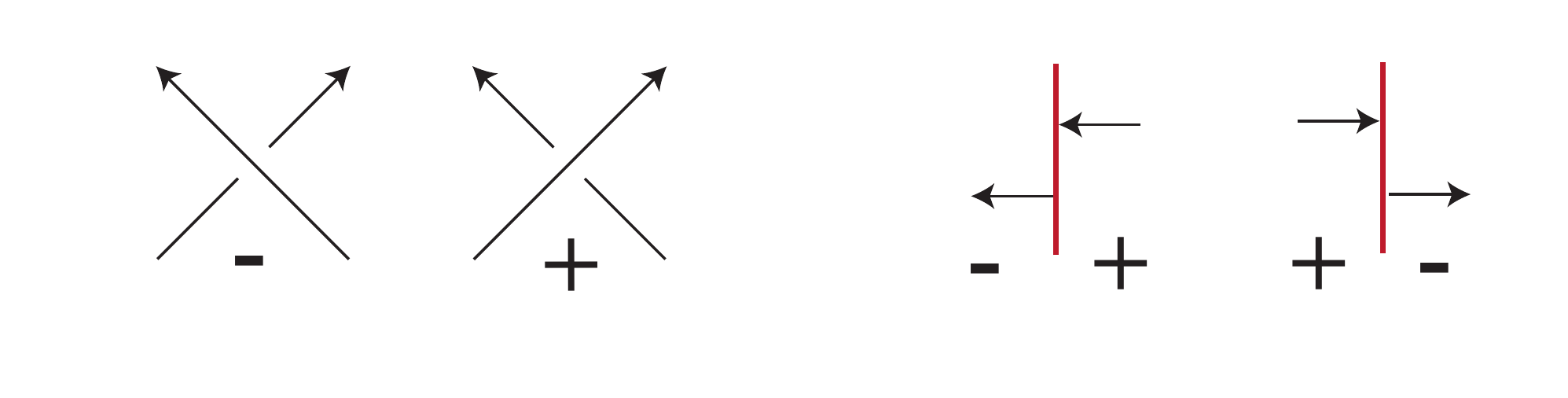}
	\caption{Left: signs for crossings of $\mathcal{F}(\Lambda)$.  Right: signs for teleporting endpoints of $\mathcal{F}(\Lambda)$ and trace curves.}
	\label{fig:signs}
\end{figure}

Assign the bottom interval of each trace curve multiplicity $0$ and assign paired trace curves opposite orientations. If there are no handle slides, define the multiplicity of any interval to be the sum of the multiplicity of the interval below and the sign of the teleporting endpoint separating them. When a handle slide occurs, add the multiplicities of the two converging branches to define the multiplicity of the interval above the trivalent point. Place $m_i(T_i)$ disjoint positively oriented segments parallel to and nearby $T_i$. These added segments may be extended to each other and to the teleporting endpoints of $\widetilde{F}(\Lambda)$ without introducing any new crossings into the diagram. See Section 7 of \cite{GaLi15}.

In order to actually construct $\widetilde{H}$, resolve crossings of $\mathcal{F}(\widetilde{\Lambda})$  as in the classical Seifert's Algorithm.  Call the resulting set of simple closed curves the \textit{total resolution} of $\Lambda$. These simple closed curves either bound discs in the Morse diagram or are isotopic to horizontal $t=c$ curves.  Note that  over- and undercrossings are unambiguous even when $\mathcal{F}(\Lambda)$ is drawn with double points. Legendrian curves whose front projections have  slopes closer to $0$ are farther from  the binding than Legendrian curves whose front projections have more negative slopes.  However, added curve segments parallel to the $T_i$ on the Morse diagram correspond to curves on the skeleton, so they cross under any front projection of a Legendrian segment.

\begin{lemma}\label{lem:intl0}  The signed intersection number  $L_0\bullet H=L_0 \bullet \widetilde{H}$ equals the signed sum of oriented horizontal curves in the total resolution of $\mathcal{F}(\Lambda)$ .  

\end{lemma}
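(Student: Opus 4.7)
The plan is to compute $L_0 \bullet \widetilde{H}$ directly from the decomposition of $\widetilde{H}$ given by the generalized Seifert's algorithm, and then invoke the identification $L_0 \bullet \widetilde{H} = L_0 \bullet H$. For the latter, I first note that $H$ and $\widetilde{H}$ differ only in pieces supported near the connecting $1$-segments of $\widetilde{\Lambda} \setminus \Lambda$, which lie in the skeleton. Since $L_0$ is zero-dimensional inside the skeleton (it is precisely the locus of index $0$ critical points) while these segments are one-dimensional, a small generic perturbation makes $L_0$ disjoint from the surgery locus, so $L_0 \bullet H = L_0 \bullet \widetilde{H}$.

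Next I would build $\widetilde{H}$ explicitly from the total resolution of $\mathcal{F}(\widetilde{\Lambda})$. The resolved simple closed curves on the Morse diagram $\partial N(B)$ are of two types: those that bound discs on the torus, and those isotopic to horizontal curves $\{t=c\}$. Each disc-bounding curve can be capped by a disk pushed slightly into the solid torus neighborhood $N(B)$ of the binding. Since $L_0$ sits at the interior index $0$ critical points of the pages, it is disjoint from $N(B)$, so these caps contribute nothing to $L_0 \bullet \widetilde{H}$.

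A horizontal curve $\{t=c\}$, on the other hand, caps off in $M \setminus N(B)$ by a surface isotopic to the page $\Sigma_c$ with a small disc neighborhood of the binding removed. Because $L_0$ meets every page $\Sigma_c$ transversely in exactly one point (the unique index $0$ critical point of $f_c$), each horizontal cap contributes $\pm 1$ to $L_0 \bullet \widetilde{H}$, with sign determined by whether the orientation of the horizontal curve matches or opposes the boundary orientation induced on $\partial \Sigma_c$ by the coorientation of $\Sigma_c$ paired with the orientation of $L_0$. Summing over the total resolution yields the claimed formula.

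The main obstacle I anticipate is orientation bookkeeping: one must carefully track how an initial orientation on $\mathcal{F}(\Lambda)$ propagates through the Seifert algorithm to an orientation on each resolved horizontal curve, and then verify that the sign contributed to $L_0 \bullet \widetilde{H}$ by that curve agrees with the sign already assigned to it in the statement of the lemma. The geometric content of each step is straightforward, but the sign conventions, which mix the $\pm V$ flow with the labeling scheme on the Morse diagram, require some care.
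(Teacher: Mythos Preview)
Your approach matches the paper's: decompose $\widetilde{H}$ into Seifert discs, observe that only the caps for horizontal resolution curves can meet $L_0$, and count one signed intersection per such cap.  The paper's proof is essentially a one-line version of this.

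There is, however, a genuine gap in your third paragraph.  You cap a horizontal curve $\{t=c\}$ on $\partial N(B_i)$ by ``the page $\Sigma_c$ with a small disc neighborhood of the binding removed.''  This works only when the binding is connected.  If $B$ has several components then $\Sigma_c\setminus N(B)$ has one boundary circle on \emph{each} $\partial N(B_j)$, so it is not a cap for the single horizontal curve on $\partial N(B_i)$; your surface $\widetilde{H}$ would acquire spurious boundary components and fail to be a Seifert surface.  The paper's phrasing---``a closed loop on a page that bounds a \emph{disc} containing the index $0$ critical point''---is pointing at the correct cap: take $\Sigma_c\setminus N(B_i)$ and fill in the remaining boundary circles by meridional discs of the other $N(B_j)$.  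The result is an honest disc with boundary exactly the given horizontal curve, it contains $c_0$ in its interior, and hence contributes $\pm1$ to $L_0\bullet\widetilde{H}$ as you want.  (A smaller slip: in your first paragraph $L_0$ is a one-dimensional link, not zero-dimensional; the genericity argument still goes through since the connecting segments in the $2$-complex Skel can be chosen disjoint from the $1$-dimensional $L_0$, but the dimension count as written is off.)
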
  

\begin{proof} The proof is an immediate consequence of the generalized Seifert's Algorithm just described.  When $M$ is reassembled by gluing the solid tori to the skeleton, each horizontal curve in the total resolution becomes a closed loop on a page that bounds a disc containing the index $0$ critical point. 
\end{proof}

\begin{lemma}\label{lem:intl1} Fix a component $L$ of $L_1$ and let $T$ be one of the two associated trace cuves on the Morse diagram.  Then $L\bullet H=\mathcal{F}(\Lambda)\bullet T$.
\end{lemma}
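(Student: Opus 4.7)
The plan is to reduce $L\bullet H$ to a signed count on the Morse diagram by replacing $L$ with a parallel push-off whose front projection is exactly $T$, and then computing the intersection with the generalised Seifert surface $\widetilde H$ constructed via Seifert's algorithm (as in the paragraph preceding Lemma~\ref{lem:intl0}).

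First I would, following Lemma~\ref{lem:l1comp}, push $L$ off itself along the flowline from the associated index-$1$ critical point $c$ that gives rise to $T$; call the result $L'$. Then $L'$ lies on the co-core through $c$ with $\mathcal{F}(L')=T$. The push-off annulus from $L$ to $L'$ sits inside the co-skeleton and can be taken so thin that $H$ meets it only in short transverse arcs joining the points of $H\cap L$ to those of $H\cap L'$; since $H$ is transverse to $L$, these arcs are in bijection with $H\cap L$, giving $L\bullet H=L'\bullet H$. Next, passing to the augmented link and surface via the generalised Seifert algorithm, one has $L'\bullet H=L'\bullet \widetilde H$: the skeletal surgery arcs that convert $\widetilde\Lambda$ back to $\Lambda$ can be chosen disjoint from the co-core through $c$, so they contribute no new intersections with $L'$.

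The heart of the argument is the local computation of $L'\bullet \widetilde H$. Working inside a solid-torus complementary region of the skeleton and using the contactomorphism to $\bigl((0,\infty)\times S^1\times S^1,\ \ker(dz+x\,dy)\bigr)$ from Theorem~1.1 of \cite{GaLi15}, $L'$ becomes (a segment of) a coordinate circle while $\widetilde H$ restricts to a union of Seifert discs coming from the components of the total resolution of $\mathcal{F}(\widetilde\Lambda)$. A Seifert disc is pierced by $L'$ precisely when its boundary loop on the Morse diagram has nontrivial algebraic intersection with $T$ in this region, which happens exactly at each transverse crossing between $T$ and a strand of $\mathcal{F}(\widetilde\Lambda)$. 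A direct sign check in the standard model shows that positive crossings on the Morse diagram (as defined in Figure~\ref{fig:signs}) correspond to positive intersection points in $M$, so $L'\bullet \widetilde H=\mathcal{F}(\widetilde\Lambda)\bullet T$. Finally, the auxiliary arcs added to $\mathcal{F}(\Lambda)$ to form $\mathcal{F}(\widetilde\Lambda)$ are parallel to various trace curves and can be drawn with vanishing algebraic intersection against $T$, giving $\mathcal{F}(\widetilde\Lambda)\bullet T=\mathcal{F}(\Lambda)\bullet T$ and hence the lemma.

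The main obstacle I anticipate is the sign bookkeeping in the third paragraph: the orientations of $L$ (by increasing $t$), of paired trace curves (opposite), of the Seifert discs coming from the local resolution, and of the crossings on the Morse diagram must all be reconciled into a single coherent convention. The cleanest route is to verify the sign in the standard local model for one positive crossing and then invoke invariance of both sides of the asserted identity under the Legendrian front Reidemeister moves of Figure~\ref{fig:reid} to propagate the result to general front diagrams.
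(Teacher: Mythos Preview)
Your outline is sound and would yield a correct proof, but the paper takes a genuinely different route.  The paper's argument goes through the \emph{Lagrangian} projection rather than the front projection and Seifert algorithm: after arranging $H\subset \Sigma\times I$, one observes that $L$ sits over the index-$1$ critical point $c$, so $L\bullet H$ is simply the winding number of $\Lambda_\Sigma$ around $c$ in $\Sigma_0$.  That winding number equals the signed intersection of $\Lambda_\Sigma$ with a co-core ray from $c$ to $B$; an isotopy of $\Lambda_\Sigma$ along the co-core pushes all these intersection points into a small neighbourhood of $c$ where the co-core flowline is $t$-independent, and then one lifts the isotopy to $\Lambda$ and reads off $\mathcal{F}(\Lambda)\bullet T$ directly.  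No push-off $L'$, no total resolution, and no disc-by-disc piercing count are needed.

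What your approach buys is self-containment within the front-projection machinery already assembled in Section~\ref{sec:linking}: you never have to introduce the Lagrangian projection, and the argument parallels the proof of Lemma~\ref{lem:intl0} more closely.  The cost is exactly the sign and auxiliary-arc bookkeeping you flag at the end --- in particular, the claim that the trace-parallel segments added to form $\mathcal{F}(\widetilde\Lambda)$ contribute zero to the intersection with $T$ is true (they correspond to curves routed near $L_0$, hence away from $L'$) but deserves a sentence of justification, and the assertion that a Seifert disc is pierced by $L'$ precisely according to the algebraic intersection of its boundary with $T$ requires knowing how the discs sit relative to the large-$x$ region where $L'$ lives.  The paper's Lagrangian argument sidesteps both issues by reducing everything to a planar winding-number computation on a single page.
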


We defer the proof of this lemma until Section~\ref{sec:lagr}.

\begin{proposition} Let $\Lambda$ be a Legendrian link contained and null-homologous in $M\setminus \Sigma_0$ with Seifert surface $H$ and let $\Lambda'$ be a Legendrian link in $M$.  The  intersection number  $\Lambda'\bullet H$  equals the signed sum of crossings where $\mathcal{F}(\Lambda')$ passes over the total resolution of $\Lambda$.  
\end{proposition}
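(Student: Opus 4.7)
The plan is to exploit the explicit description of $H$ from the generalized Seifert's algorithm reviewed just before the statement, reducing $\Lambda' \bullet H$ to a local count inside each solid torus component of $M \setminus (\text{Skel} \cup B)$.

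First, I would apply a generic Legendrian isotopy so that $\Lambda'$ is transverse to both the skeleton and $H$. Since the pieces of $H$ sitting in the skeleton are two-dimensional within the skeleton itself, transversality of $\Lambda'$ with the skeleton lets us perturb $\Lambda'$ so that the finite set $\Lambda' \cap H$ lies entirely in the interior of the Seifert disc pieces, that is, in the open solid tori $M \setminus (\text{Skel} \cup B)$. The intersection count therefore decomposes as a sum of local contributions over these solid tori.

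Second, I would pass to the local model using the contactomorphism in~(\ref{stdtori}). In each solid torus the pieces of $H$ are Seifert discs bounded by the relevant components of the total resolution; they are the image under the local contactomorphism of the standard Seifert discs produced by Seifert's algorithm applied to the front projection of $\widetilde{\Lambda}$ in $(\R^3, \xist)$. The same local model controls the front projection of $\Lambda'$. Then I would invoke the standard fact for $(\R^3, \xist)$ that the algebraic intersection between a Legendrian $\Lambda'$ and a Seifert disc constructed by Seifert's algorithm equals the signed count of crossings in which $\mathcal{F}(\Lambda')$ passes over the Seifert circle bounding the disc, using the front-projection convention that a strand of less negative slope lies above one of more negative slope. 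Summing these local contributions across all solid tori yields the proposition, with Lemmas~\ref{lem:intl0} and~\ref{lem:intl1} serving as sanity checks for the special cases $\Lambda' \subset L_0$ and $\Lambda' \subset L_1$.

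The main obstacle is the bookkeeping of signs and of multiplicities. Each trace curve interval $T_i$ carries a multiplicity $m_i(T_i)$ coming from the teleporting endpoints and handle slides of $\mathcal{F}(\Lambda)$, and in the generalized Seifert's algorithm one stacks $m_i(T_i)$ parallel copies of that segment; I would need to verify that crossings of $\mathcal{F}(\Lambda')$ over this region are counted with the multiplicity dictated by the corresponding stack of Seifert disc sheets, and that the orientation convention in Figure~\ref{fig:signs} lines up the sign of each such over-crossing with the transverse intersection sign inherited from the orientations of $\Lambda'$ and $H$. This verification is tedious but follows the conventions of Section~7 of~\cite{GaLi15}.
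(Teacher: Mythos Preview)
Your proposal is correct and follows essentially the same route as the paper's proof: both exploit the explicit Seifert surface $\widetilde{H}$ from the generalized Seifert algorithm, reduce the intersection count to the Seifert discs bounded by the resolution curves, and argue that inside each solid torus the over-crossing count in the front projection computes $\Lambda'\bullet H$. Your perturbation step, pushing the finitely many points of $\Lambda'\cap H$ off the skeleton, plays the same role as the paper's observation that the added segments in the skeleton can be chosen arbitrarily close to $L_0$ and hence lie \emph{behind} $\Lambda'$ in the front-projection sense; the paper also notes explicitly that the twisted bands reconnecting the resolved crossings may be chosen disjoint from $\Lambda'$, a point you leave implicit but which is equally routine.
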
 

\begin{figure}[htbp]
	\centering
	\includegraphics[width=\textwidth]{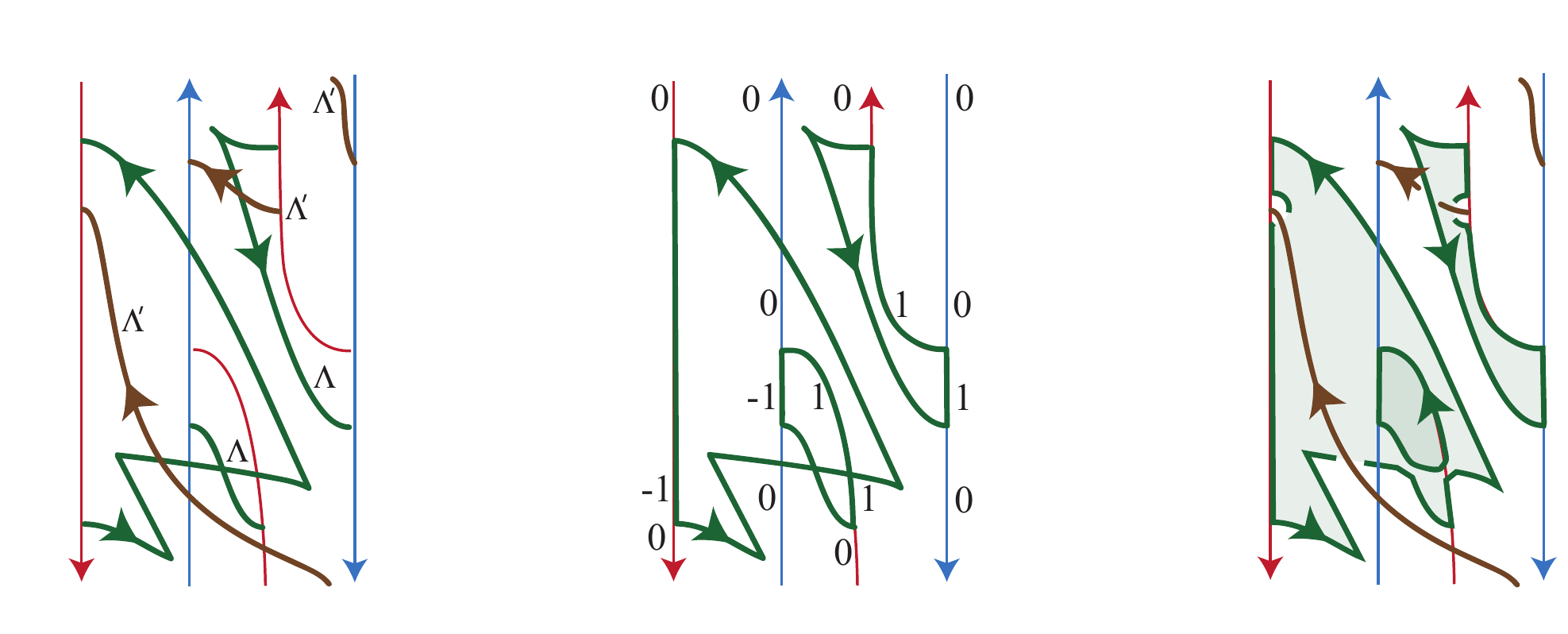}
	\caption{$\Lambda$ is null-homologous in $M\setminus \Sigma_0$, while $\Lambda'$ generates $H_1(M)$.  The middle picture shows the total resolution of $\Lambda$, with intervals of the trace curves labeled by their multiplicity. The right-hand picture shows that the signed intersection sum between $\Lambda'$ and the Seifert discs for $\Lambda$ is $-1$, and hence, $\Lambda'\bullet H=-1$.}	\label{fig:seifsurf}
\end{figure}

\begin{proof} Each closed curve in the total resolution of $\Lambda$ bounds a disc, and we claim that the signed sum of crossings counts intersections between $\Lambda'$ and these discs.  When both Legendrians lie in the interior of $M\setminus \text{Skel}$, this is immediate, so the only subtlety arises when $\Lambda'$ crosses the skeleton. To see that this has no effect on the linking, consider the segments added to $\Lambda$ which appear as parallel segments of trace curves in the total resolution.  These may be chosen to consist of two partial flowlines of $V$ connected via a vertical segment arbitrarily close to $L_0$.  Thus each added segment on the front corresponds to a curve of $\widetilde{\Lambda}$ passing behind $\Lambda'$. To assemble the discs into a Seifert surface $\widetilde{H}$ for $\widetilde{\Lambda}$, it suffices to glue them along twisted bands that recover the resolved crossings, and these can be assumed not to introduce any new intersections.  Finally, we construct $\widetilde{H}$ from $\widetilde{H'}$ by gluing along the curve segments added in the skeleton, observing that operations also preserves the count of intersection points.

If the original link $\Lambda$ was formed by adding auxiliary components to a link that was null-homologous in $M$ but not in $\Sigma\times I$, we may glue along the added components to recover a Seifert surface for the original link, again without changing the intersection number.  
\end{proof}

\subsection{Intersection with $L_0$ in regions of trivial monodromy}

The primary motivation for computing linking numbers is Theorem~\ref{thm:front}, and Lemma~\ref{lem:intl0} shows how to compute the intersection with $L_0$ even though it does not immediately admit a front projection.  Here, we note a further option that may  permit one to bypass constructing  the total resolution.  

Given a Morse structure on an open book $(B, \pi, F, V)$, the monodromy is \textit{locally trivial} for $t\in [t_0, t_1]$ if the trace curves on the Morse diagram are isotopic to vertical lines in the given $t$-interval.   Since handle slides occur only at isolated $t$-values, every Morse structure has many locally trivial regions.  

\begin{lemma}\label{lem:easylk} Suppose that $\Lambda$ is contained and null-homologous in $\Sigma\times [t_0, t_1]$ for some $t$-interval where the monodromy is locally trivial and let $H$ be a Seifert surface for $\Lambda$.  Then 
\[  L_0 \bullet H= \mathcal{F}(\Lambda) \bullet \{x_i=0\}.\] 

\end{lemma}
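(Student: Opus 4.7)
The plan is to derive this formula from Lemma~\ref{lem:intl0} by analyzing the total resolution of $\mathcal{F}(\Lambda)$ homologically on the Morse diagram torus, using local triviality of the monodromy to kill the contribution of the arcs added during the resolution procedure.

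First, I would invoke Lemma~\ref{lem:intl0}: $L_0 \bullet H$ equals the signed count of horizontal curves in the total resolution of $\mathcal{F}(\Lambda)$. Each horizontal curve is isotopic on the torus $\partial N(B_i)$ to a circle $\{t = c\}$ and so meets the vertical curve $\{x_i = 0\}$ algebraically in $\pm 1$ point according to its orientation, while the contractible components of the total resolution are null-homologous on the torus and do not contribute. Thus the signed count of horizontal curves equals the algebraic intersection of the 1-cycle represented by the total resolution with $\{x_i = 0\}$.

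Second, I would identify this 1-cycle homologically with a natural closure of $\mathcal{F}(\Lambda)$. The total resolution is built by connecting teleporting endpoints of $\mathcal{F}(\Lambda)$ via arcs placed parallel to the trace curves (with the multiplicities described in Section~\ref{sec:linking}), and then applying the classical Seifert resolution at every crossing. The Seifert resolutions preserve the homology class of the underlying 1-cycle on the torus, so they are invisible to intersection numbers with a transverse curve.

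Third, in the locally trivial region the trace curves are isotopic to vertical lines on the Morse diagram, so the added parallel arcs are themselves vertical, possibly supplemented by short horizontal connectors near the teleporting endpoints. After a generic perturbation of $\{x_i = 0\}$ to avoid the finitely many trace curves and connectors, the added segments contribute zero to the algebraic intersection with $\{x_i = 0\}$. Hence the intersection of the total resolution with $\{x_i = 0\}$ coincides with that of $\mathcal{F}(\Lambda)$, completing the identification
\[
L_0 \bullet H = \mathcal{F}(\Lambda) \bullet \{x_i = 0\}.
\]
The main technical point is verifying that in the locally trivial region the arcs added during the total resolution are all homologous to vertical curves, so that — once $\{x_i = 0\}$ is perturbed to a generic $x_i$-coordinate — none of them contribute to the intersection count; this is automatic from the fact that trace curves are isotopic to vertical lines in such a region.
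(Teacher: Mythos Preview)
Your argument is correct and proves the lemma, but it takes a different route from the paper's.  The paper argues directly via the Lagrangian projection: since the monodromy is locally trivial on $[t_0,t_1]$, projecting along $\partial_t$ sends $\Lambda$ to a null-homologous curve $\pi(\Lambda)\subset\Sigma_{t_0}$, and intersections of $H$ with $L_0$ become intersections of $\pi(H)$ with the index~$0$ critical point $c_0$; this is the winding number of $\pi(\Lambda)$ around $c_0$, computable as the algebraic intersection with any ray from $c_0$ to $\partial\Sigma$, which is exactly $\mathcal{F}(\Lambda)\bullet\{x_i=0\}$.  Your approach instead deduces the formula from Lemma~\ref{lem:intl0} by a homological computation on the Morse diagram torus, using that in a locally trivial region the arcs added in the total resolution are parallel to vertical trace curves and hence invisible to the pairing with $\{x_i=0\}$.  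One caution: your step equating the signed count of horizontal Seifert circles with the intersection against a \emph{single} vertical $\{x_i=0\}$ implicitly assumes all horizontal circles lie on the one torus $\partial N(B_i)$.  If $\partial\Sigma$ has several components, your argument as written naturally yields $L_0\bullet H=\sum_j \mathcal{F}(\Lambda)\bullet\{x_j=0\}$; getting a single $i$ requires the independence-of-ray observation that the paper's winding-number picture supplies.
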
 

\begin{proof}
When $\Lambda$ is contained in a region of trivial monodromy, the front projection and the Lagrangian projection are closely related.  Let $\pi: \Sigma\times [t_0, t_1]\rightarrow \Sigma_{t_0}$ denote the Lagrangian projection to $\Sigma_{t_0}$; one may always reparametrise to set $t_0=0$ if desired.   The condition that $\Lambda$ is null-homologous  in the cylinder translates to the condition that $\pi(\Lambda)$ is null-homologous in $\Sigma_{t_0}$.  Intersections between the Seifert surface $H$ and $L_0$ project to intersections between $\pi(H)$ and the index $0$ critical point $c_0$ on $\Sigma_{t_0}$, and these in turn correspond to intersections between $ \mathcal{F}(\Lambda)$ and a ray from $c_0$ to $B$.  Equivalently, we have $L_0 \bullet H= \mathcal{F}(\Lambda) \bullet \{x_i=0\}$, where $x_i$ is the parameter around some component of $B$ on the trace diagram.
\end{proof}

Although it may seem prohibitively restrictive to require the entire link to live in a $t$-interval of trivial monodromy, the next lemma demonstrates that this condition may always be realized via a Legendrian isotopy.  For maximum applicability, the lemma is stated here in a more general form than required for the purposes of this section.

\begin{lemma}[Standard Form Lemma]\label{lem:sfl}
Given any $\Lambda\subset (M, \xi)$ and $\epsilon >0$,   there exists  $\Lambda'\subset \Sigma\times [0, \epsilon]\subset M(\Sigma, \phi)$ where $M(\Sigma, \phi)$ is contactomorphic to $(M, \xi)$ and $\Lambda'$ is Legendrian isotopic to the image of $\Lambda$ under the given contactomorphism.
\end{lemma}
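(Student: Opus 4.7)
The proof should proceed in two steps: first, a Legendrian isotopy that displaces $\Lambda$ from the page $\Sigma_0$; second, a contact isotopy of $(M,\xi)$ that compresses $M\setminus\Sigma_0$ into an arbitrarily thin collar of $\Sigma_0$. For the first step we simply invoke Remark 6.3 of \cite{GaLi15}, already used in the proof of Lemma~\ref{lem:intB}, which guarantees that $\Lambda$ is Legendrian isotopic to a knot disjoint from $\Sigma_0$. Choosing a small tubular neighborhood of $\Sigma_0$ and pushing a little further, we may assume $\Lambda\subset \Sigma\times (0,1-\delta)$ inside the mapping torus for some $\delta>0$.

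For the second step, choose an orientation-preserving diffeomorphism $h\co [0,1]\to[0,1]$ fixing the endpoints to infinite order with $h([0,1-\delta])\subset [0,\epsilon]$, and define $H\co M(\Sigma,\phi)\to M(\Sigma,\phi)$ by $H(x,t)=(x,h(t))$ on the mapping torus. Since $h$ agrees with the identity to infinite order at $t=0,1$, $H$ extends smoothly across the binding by the identity. Connect $H$ to the identity through a smooth family $H_s$ of such diffeomorphisms. The pullbacks $\xi_s:=H_s^*\xi$ form a smooth path of contact structures on $M$ with $\xi_0=\xi$, so Gray stability provides an ambient isotopy $g_s\co M\to M$ with $g_0=\mathrm{id}$ and $g_s^*\xi_s=\xi$. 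The composition $\Psi_s:=H_s\circ g_s$ is then a contact isotopy of $(M,\xi)$ starting at the identity. Consequently, $\Lambda':=\Psi_1(\Lambda)$ is Legendrian isotopic to $\Lambda$, and because $H$ carries $\Sigma\times [0,1-\delta]$ into $\Sigma\times [0,\epsilon]$, we obtain $\Lambda'\subset \Sigma\times[0,\epsilon]$ as required.

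The main technical obstacle is ensuring that the Gray-stability correction $g_s$ does not displace $g_s(\Lambda)$ out of $H_s^{-1}(\Sigma\times[0,\epsilon])$ before the final compression is applied. This can be handled by building in extra margin: since the vector field generating the Gray flow has magnitude controlled by the rate of change of the contact forms, one may shrink the neighborhood of $\Sigma_0$ used in step (i) so that $\Lambda$ sits well inside the slab to be compressed. An equally clean alternative is to iterate: each application of the construction strictly compresses the slab containing $\Lambda$, and finitely many iterations bring $\Lambda$ inside $\Sigma\times[0,\epsilon]$. Either way, the point is that reparametrizing the $t$-direction of the mapping torus is realized by a contact isotopy, which transports $\Lambda$ to its standard form image without altering its Legendrian isotopy type.
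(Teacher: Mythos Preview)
Your approach is genuinely different from the paper's, and the idea of realising the $t$-compression by a global diffeomorphism followed by a Gray correction is natural. However, the gap you flag in your final paragraph is real and your proposed repairs do not close it. Writing $\Psi_1=H\circ g_1$, you need $g_1(\Lambda)\subset\Sigma\times[0,1-\delta]$ before applying $H$, and nothing in the argument bounds where the Gray flow deposits $\Lambda$. Your fix (a) does not help: shrinking the collar from which $\Lambda$ was initially displaced has no bearing on the size of the Gray vector field, which is determined by the family $H_s^*\xi$ and hence by the compression $h$; asking $h$ to compress more can make the correction larger, so there is no margin you can build in a priori. Your fix (b) fails for the same reason: one pass yields $\Lambda'=H(g_1(\Lambda))$ with $g_1(\Lambda)$ possibly occupying all of $\Sigma\times(0,1)$, so there is no guarantee that the $t$-extent of $\Lambda'$ is any smaller than that of $\Lambda$, and iterating an operation not known to shrink the slab gives no convergence.

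The paper sidesteps this entirely by constructing the compressing Legendrian isotopy by hand rather than invoking Gray stability. Using that each component of $M\setminus(\text{Skel}\cup B)$ is contactomorphic to $\big((0,\infty)\times S^1\times S^1,\ \ker(dz+x\,dy)\big)$ with $z$ identified with $t$, one lifts $\Lambda$ (already disjoint from $\Sigma_0$) to the $y$-cyclic cover inside standard $(\R^3,\xist)$ and simply rescales the $z$-coordinate of the front projection; this is an explicit Legendrian isotopy halving the $z$-extent. Performing the same rescaling simultaneously in every toroidal chart keeps $\Lambda$ continuous across the skeleton, and iterating brings it into $\Sigma\times[0,\epsilon]$. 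The point is that working in the local model lets one write down the isotopy directly, so no uncontrolled correction term ever appears.
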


\begin{proof} We begin with the standard replacement of $(M, \xi)$ with a manifold constructed from a compatible open book with a Morse structure, and let $\Lambda''$ be the image of the original Legendrian under this map.  Perhaps after applying some Legendrian isotopy, we may assume this Legendrian is disjoint from  $\Sigma_0$; we continue to denote it by $\Lambda''$.  

Recall from Equation~(\ref{stdtori}) that each component of $M\setminus (\text{Skel}\cup B)$ is contactomorphic to $(0,\infty)\times S^1\times S^1$ with the contact form $dz+ x dy$.  Suppose first that $\Lambda''$ is contained in a single such component.  The condition that $\Lambda'' \cap \Sigma_0=\emptyset$ translates to the fact that its image in $W$ lies in $(0,\infty)\times S^1\times [z_0, z_1]$, and we may lift this to the cyclic cover $(0,\infty)\times \mathbb{R}\times  [z_0, z_1]\subset \mathbb{R}^3$.  We now apply a Legendrian isotopy to the resulting Legendrian $\widetilde{\Lambda}$ which pushes it into $(0,\infty)\times \mathbb{R}\times  [z_0, \frac{1}{2}z_1]$.   For example, this may be done simply by rescaling the $z$-coordinate of the front projection. Taking the quotient and including the interval into $S^1$ once again produces a Legendrian isotopy which halves the height of $\Lambda''$ in $M$.  

To show that such an isotopy exists for arbitrary $\Lambda''$ disjoint from $\Sigma_0$, it suffices to perform the same operation simultaneously on all components of $M\setminus (\text{Skel}\cup B)$ to preserve the continuity of $\Lambda''$ across intersections with the skeleton.  Iterating this process ensures that the original $\Lambda''$ is Legendrian isotopic to the resulting Legendrian link $\Lambda'$ that lies in an arbitrarily small $t$-interval of the open book.  
\end{proof}

\section{The generalized Lagrangian projection}\label{sec:lagr}

The classical Lagrangian projection in $(\mathbb{R}^3, \xi_{\text{std}})$ is the image of a Legendrian curve under projection to the $xy$-plane, and in the context of open books, projection to a page plays a similar role.  In this section, we project appropriate $\Lambda$ to a fixed page of the open book decomposition and examine the properties of this \textit{generalized Lagrangian projection}.

Consider the closure of $M\setminus \Sigma_0$.  The defining data of a Morse structure provide an explicit identification with $\Sigma\times I$, and collapsing the interval coordinate $t$ defines a projection from the complement of a single page to a copy of $\Sigma$.

\begin{defn} Given a Legendrian curve  disjoint from $\Sigma_0$ and parametrised as  $\Lambda(s)=\big(p(s),t(s)\big)\subset \Sigma\times I$,  the generalized Lagrangian projection is the image
\begin{equation*}
\Lambda_\Sigma(s):=p(s)\subset \Sigma_0.
\end{equation*}
\end{defn}

 Abusing notation, we will assume when necessary that over- and undercrossing data is recorded in the Lagrangian projection. 

As one would hope, this projection preserves contact geometric information, not simply topological data.  To see this, recall from Section~\ref{sec:back} that each component of $M\setminus (\text{Skel}\cup B)$ is contactomorphic to the quotient of an open subset of $(\mathbb{R}^3, dz+xdy)$ by the map identifying $z$ with $z+1$.  This contactomorphism $\Psi$ preserves the decomposition into pages, mapping $z$ in $\mathbb{R}^3$ to $t \in M$.  It follows that the generalized Lagrangian projection of $\Lambda$ on $\Sigma_0$ is the image of the standard Lagrangian projection of $\Psi^{-1}(\Lambda)$ on $\{z=0\}\subset \mathbb{R}^3$.  Thus many properties of the standard Lagrangian projection may be imported to $M$.  Although the contactomorphism is defined component-wise on $M\setminus (\text{Skel}\cup B)$,   the globally defined vector field $\partial_t$ extends it to any $\Lambda\subset M\setminus \Sigma_0$ that meets the skeleton transversely, which is a generic condition.

As in $\mathbb{R}^3$, the Lagrangian projection of a Legendrian link is harder to study than the front projection.  For example,  one can expect at best  a weak Reidemeister Theorem, since a generic isotopy of the Lagrangian in the page will not lift to a Legendrian isotopy.  However, the Lagrangian projection has the advantage that its curves are immersed.   Similarly,  one may recover geometric data about $\Lambda$ from $\Lambda_\Sigma(s)$ via integral computations as in the case of $(\mathbb{R}^3, \xi_\text{std})$; see, for example, \cite[Lemma~3.2.6]{Ge08}.  All  these claims are  easily derived using the toroidal charts described above. 	 

The Lagrangian projection provides a convenient tool for proving Lemma~\ref{lem:intl1}, which computes the intersection number between a Seifert surface and a fixed component $L$ of $L_1$: $L\bullet H=\mathcal{F}(\Lambda)\bullet T$.

\begin{proof}[Proof of Lemma~\ref{lem:intl1}]
 After possibly introducing an auxiliary link $X$, we may assume that the Seifert surface $H$ lies entirely in the cylinder $\Sigma \times I$, and thus admits a Lagrangian projection.  Then the intersection number $L\bullet H$ is simply the winding number of $\Lambda_\Sigma$ with respect to the associated index one critical point $c$.  This in turn is equal to the signed intersection number with a ray from $c$ to $B$, and without loss of generality, we may take this ray to be a flowline.  We now observe that as $t$ varies, the co-core flowlines may be fixed in a sufficiently small neighborhood of $c$; this is in contrast to the flowlines making up the core. Isotope $\Lambda_\Sigma$ via finger moves in $\Sigma_0$ along the co-core flowline until all intersection points lie in this  neighborhood of $c$.  Lift this isotopy to a (non-Legendrian) isotopy of $\Lambda$, noting that the count of intersection points between $\mathcal{F}(\Lambda)$ and the trace curve is preserved throughout.  It follows that the count of intersection points on the Morse diagram agrees with the count of intersection points in the Lagrangian projection, as desired.  
\end{proof}

At present, we are primarily interested in computing the classical invariants of an appropriate Legendrian knot from its  Lagrangian projection.  Recall  that Proposition~\ref{lem:classicalInvariantsInLagrangian} describes the Thurston-Bennequin and rotation numbers of $\Lambda$ in terms of the writhe and surface rotation of $\Lambda_\Sigma$, respectively.

\begin{proof}[Proof of Proposition~\ref{lem:classicalInvariantsInLagrangian}]

 Suppose that $\Lambda$ is a Legendrian knot contained and null-homologous in $\Sigma\times I$.  Let $H_\emptyset$ denote any representative of the unique homology class of a Seifert surface for $\Lambda$ in the cylinder $\Sigma\times I$.
 

The essential geometric fact is that any trivialization of $T\Sigma$ can be interpreted as a trivialization of $\xi$  over $\Sigma$.  Since $\Lambda_\Sigma$ is null-homologous in $\Sigma$, the classical Seifert's algorithm may be used to construct a Seifert surface $H_\emptyset$ from the Lagrangian projection so that the trivialization of $\xi$ lifts from $\Sigma$ to $H_\emptyset \subset \Sigma\times I$.  Since $\partial\Sigma\neq \emptyset$, such a trivialization  of $T\Sigma$ always exists.   

 To prove the formula for the Thurston-Bennequin invariant, observe that the contact framing and the page framing agree.  Thus, we compute the linking between $\Lambda$ and $\Lambda_t$, a transverse push-off in the $\partial_t$ direction,  As usual, this is given by the writhe of the original Lagrangian projection:
\[\tb(\Lambda):=\lk(\Lambda,\Lambda_t)=\writhe_\Sigma(\Lambda_\Sigma).\]

 By definition, $\text{rot}(\Lambda)$ is the rotation of $T\Lambda$ with respect to any trivialization of $\xi$ over $H_\emptyset$. The discussion above establishes that this is equivalent to the rotation of $T\Lambda$ with respect to some non-vanishing vector field and proves the first equality of the proposition.  However, we may tie this more closely to earlier sections of the paper by considering how this rotation  differs from the rotation computed with respect to the Morse structure vector field $V$. 

Consider the projection of some $H_\emptyset$ to $\Sigma_0$.  Generically, we expect that this image may include points where $V_0$ vanishes, so we identify an isotopy of vector fields that transforming  $V_0$ into some $V'$  which is non-vanishing on all points in the image of the projection of $H_\emptyset$.

\begin{figure}[htbp]
	\centering
	\includegraphics[width=\textwidth]{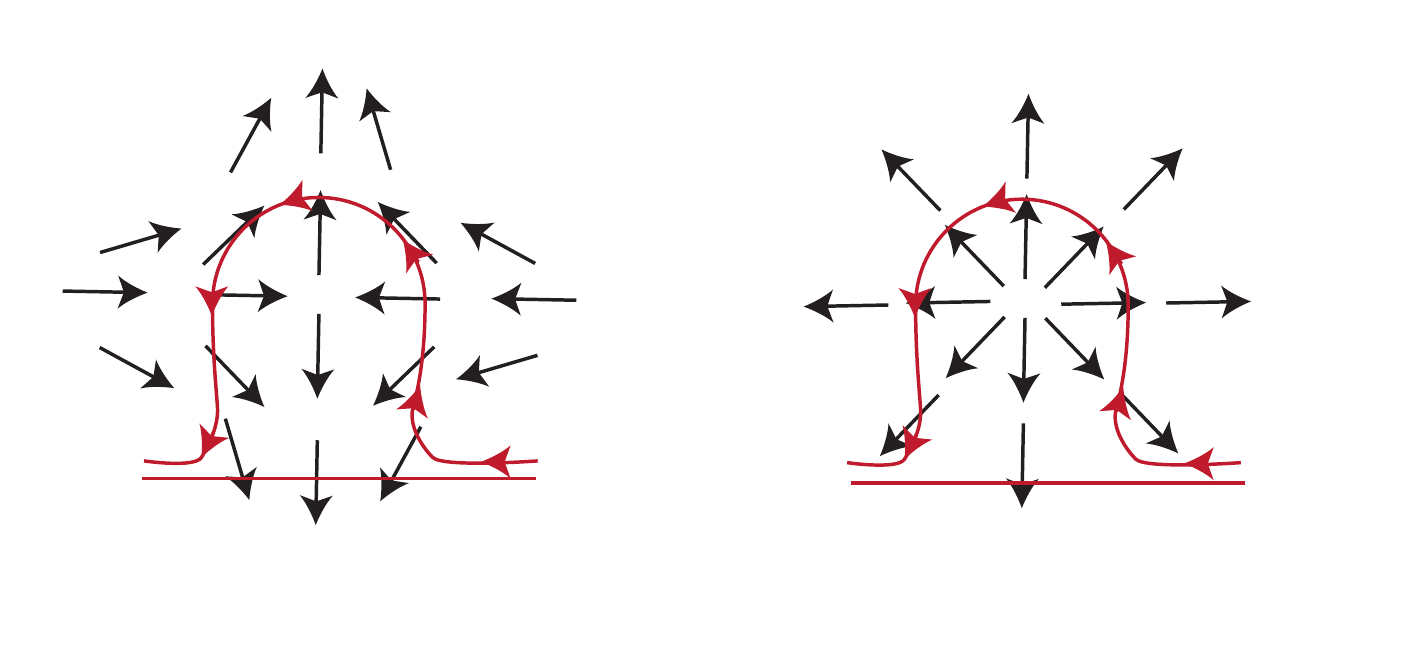}
	\caption{Index $1$ (left) and $0$ (right) critical points of $V$.  If the red curve bounds the projection of some Seifert surface $H_\emptyset$, we may directly compute the change in rotation number when $H_\emptyset$ is isotoped to lie in the complement of the vanishing locus of $V$.}
	\label{fig:vtox}
\end{figure}

By direct computation, we see that isotoping $c_0$ to lie outside the image of $H_\emptyset$ changes the rotation number by $+1$, while isotoping $V_0$ to move an index one critical point into  the complement of the image changes the rotation number by $-1$.  See Figure~\ref{fig:vtox}. Thus we may conclude that
\[\rot(\Lambda)=\rot_{Y}(\Lambda_\Sigma)= \rot_{V_0}(\Lambda_\Sigma)+ \mathcal{L}\bullet H_\emptyset.\]
\end{proof}

\begin{remark} 
Note that $\mathcal{L}\bullet H_\emptyset$ is simple to compute, as the winding number of $\Lambda_\Sigma$ around $c_i$ determines the multiplicity of the Lagrangian projection of $H_\emptyset$ there.
\end{remark}

\begin{remark}
	The formulas for the front and the Lagrangian projection are clearly compatible.   The Lagrangian projection is defined for   $\Lambda$ disjoint from $\Sigma_0$, so it is immediate that $\lk(B,\Lambda)=0$.  Moreover, points on $\Lambda$ which are mapped to down/up cusps under the front projection are mapped to positive/negative tangencies of $V_0$ under the Lagrangian projection. It follows that the term $1/2[D-U]$ in the front projection corresponds to $\rot_{V_0}(\Lambda_\Sigma)$.  Finally, observe that if $c_0$ and some $c_1$ lie in the same Seifert circle of a resolution of $\Lambda_\Sigma$, then standard cancellation moves may be applied to simultaneously remove both singularities; this should be compared to the fact that intersections of $H$ between $L_0$ and a component of $-L_1$ contribute canceling summands. 
\end{remark}

\end{document}